\newtheorem{theorem}{Theorem}
\newtheorem{lemma}[theorem]{Lemma}
\newtheorem{proposition}[theorem]{Proposition}
\numberwithin{equation}{section}
\begin{document}
\title{Improved Beckner-Sobolev inequalities on K\"ahler manifolds}
\author{Fabrice Baudoin}
\email{fabrice.baudoin@uconn.edu}
\address{Department of Mathematics, University of Connecticut, Storrs, CT
06268, USA}
\author{Ovidiu Munteanu}
\email{ovidiu.munteanu@uconn.edu}
\address{Department of Mathematics, University of Connecticut, Storrs, CT
06268, USA}

\begin{abstract}
We prove new Beckner-Sobolev type inequalities on compact K\"{a}hler
manifolds with positive Ricci curvature. As an application, we obtain a
diameter upper bound that improves the Bonnet-Myers bound.
\end{abstract}

\thanks{The first author was partially supported by NSF grant DMS-1660031 and the Simons Foundation.
The second author was partially supported by NSF grant DMS-1506220.}
\maketitle

\section{Introduction}

Sobolev inequalities have been long studied on Riemannian manifolds. It is
well known that on a compact Riemannian manifold $\left( M^{n},g\right) $ we
have a family of inequalities of the type 
\begin{equation*}
\left( \int_{M}\left\vert \phi \right\vert ^{p}\right) ^{\frac{2}{p}}\leq
A\int_{M}\left\vert \nabla \phi \right\vert ^{2}+B\int_{M}\phi ^{2},
\end{equation*}%
for any $\phi \in C^{\infty }\left( M\right) $ and $\frac{2n}{n-2}\geq p>2$,
where $A$ and $B$ are some fixed constants that may depend on $M$.

It is an important question to obtain sharp bounds for the constants $A$ or $%
B$. An important result of Hebey-Vaugon \cite{HV} says that for $p=$ $\frac{%
2n}{n-2}$ the constant $A$ may be taken to be the optimal constant in the
Sobolev inequality on $\mathbb{R}^{n}$, that is $A=\frac{4}{n\left(
n-2\right) \omega _{n}^{2/n}},$ where $\omega _{n}$ is the volume of the
unit sphere in $\mathbb{R}^{n+1}$. We refer to \cite{H} for an excellent
reference on the $AB$ program.

Our interest lies in the case when $B=1$, that is 
\begin{equation*}
\left( \int_{M}\left\vert \phi \right\vert ^{p}\right) ^{\frac{2}{p}%
}-\int_{M}\phi ^{2}\leq A\int_{M}\left\vert \nabla \phi \right\vert ^{2},
\end{equation*}%
for any $\phi \in C^{\infty }\left( M\right) $ and $\frac{2n}{n-2}\geq p>2$.
Here and below we normalized the volume so that $\mathrm{Vol}\left( M\right)
=1$. The best constant $A$ can be estimated geometrically in terms of a\
Ricci curvature lower bound on the manifold.

\begin{theorem}\label{theo intro}
\label{SR}Let $\left( M^{n},g\right) $ be a compact Riemannian manifold with
Ricci curvature $\mathrm{Ric}\geq \rho $, for some constant $\rho >0$. Then 
\begin{equation}\label{sobo intro}
\left( \int_{M}\left\vert \phi \right\vert ^{p}\right) ^{\frac{2}{p}%
}-\int_{M}\phi ^{2}\leq \frac{\left( n-1\right) \left( p-2\right) }{n\rho }%
\int_{M}\left\vert \nabla \phi \right\vert ^{2},
\end{equation}%
for all $\phi \in C^{\infty }\left( M\right) $.
\end{theorem}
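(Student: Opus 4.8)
The plan is to prove \eqref{sobo intro} by the variational/rigidity method: realize the sharp constant as the reciprocal of the optimal eigenvalue in a Rayleigh-type quotient, produce a positive extremal solving an Euler--Lagrange equation, and bound its eigenvalue below by $\tfrac{n\rho}{(n-1)(p-2)}$ via a refined integrated Bochner (Reilly) identity.

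First I would reduce to nonnegative test functions, since $|\nabla|\phi||=|\nabla\phi|$ almost everywhere and $\int_M|\phi|^p$ is unchanged; it then suffices to minimize the scale-invariant quotient $J(u)=\int_M|\nabla u|^2\big/\big(\|u\|_p^2-\|u\|_2^2\big)$ over nonconstant $u\ge 0$, and to show $\inf J=:\lambda$ satisfies $\lambda\ge\tfrac{n\rho}{(n-1)(p-2)}$, which is exactly \eqref{sobo intro} with the stated constant. For subcritical $p<\tfrac{2n}{n-2}$ the embedding $W^{1,2}\hookrightarrow L^p$ is compact, so a minimizer exists; it is nonconstant (constants are excluded from the quotient), and by elliptic regularity and the maximum principle it is smooth and strictly positive. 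Normalizing $\|u\|_p=1$, the Euler--Lagrange equation reads $\Delta u=\lambda\,(u-u^{p-1})$, and multiplying by $u$ and integrating gives $\int_M|\nabla u|^2=\lambda\,(1-\|u\|_2^2)$, consistent with $J(u)=\lambda$.

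The core step is the Bochner formula $\tfrac12\Delta|\nabla u|^2=|\mathrm{Hess}\,u|^2+\langle\nabla u,\nabla\Delta u\rangle+\mathrm{Ric}(\nabla u,\nabla u)$. Integrating it against $1$ and using $|\mathrm{Hess}\,u|^2\ge\tfrac1n(\Delta u)^2$ with $\mathrm{Ric}\ge\rho$ yields only $\tfrac{n-1}{n}\int_M(\Delta u)^2\ge\rho\int_M|\nabla u|^2$; substituting the equation, this is \emph{not} sharp, leaving an excess term $\int_M\big(u^{2p-2}+(p-1)u^2-p\,u^p\big)$ of indefinite sign. To recover the sharp constant I would integrate the Bochner identity against a power weight $u^{\alpha}$ (equivalently, after a substitution $u=w^{\sigma}$), rewrite $\langle\nabla u,\nabla\Delta u\rangle=\lambda\big(1-(p-1)u^{p-2}\big)|\nabla u|^2$ from the equation, integrate the Hessian term by parts, and add a suitable multiple of the traceless bound $|\mathrm{Hess}\,u-\tfrac{\Delta u}{n}\,g|^2\ge 0$. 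The exponent $\alpha$ is chosen so that all nonlinear contributions, the gradient terms in $u^{p-2}|\nabla u|^2$ and the quartic terms in $|\nabla u|^4u^{-\gamma}$, assemble into a single nonnegative quadratic form; the threshold at which this form degenerates is precisely $\lambda=\tfrac{n\rho}{(n-1)(p-2)}$, the equality case reducing to the Obata-type rigidity $\mathrm{Hess}\,u=\tfrac{\Delta u}{n}\,g$. The main obstacle is exactly this weighted bookkeeping: selecting $\alpha$ so that the curvature term $\mathrm{Ric}\ge\rho$, the dimensional term $\tfrac1n(\Delta u)^2$, and the nonlinear gradient terms combine with the correct signs to produce the factor $\tfrac{n}{n-1}$.

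Finally, the critical exponent $p=\tfrac{2n}{n-2}$, where compactness of the embedding fails and no minimizer need exist, I would treat by a limiting argument: apply the inequality for subcritical exponents $q\uparrow\tfrac{2n}{n-2}$ to a fixed smooth $\phi$, and use the continuity of $q\mapsto\|\phi\|_q$ together with the continuity of $q\mapsto\tfrac{(n-1)(q-2)}{n\rho}$ to pass to the limit, obtaining \eqref{sobo intro} over the full range $\tfrac{2n}{n-2}\ge p>2$.
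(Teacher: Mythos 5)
Your proposal is correct and is essentially the paper's own route: the authors do not reprove Theorem \ref{SR} but attribute it to Bidaut-Veron and Veron \cite{VV} (see also Ch.~6.8 of \cite{BGL}), whose method---pass to a positive extremal of the quotient, derive the Euler--Lagrange equation $\Delta u=\lambda\left(u-u^{p-1}\right)$, and close a power-weighted integrated Bochner inequality using $\left\vert \nabla ^{2}u\right\vert ^{2}\geq \frac{1}{n}\left(\Delta u\right)^{2}$ and $\mathrm{Ric}\geq \rho$, with the quadratic form degenerating exactly at $\lambda =\frac{n\rho }{\left( n-1\right) \left( p-2\right) }$, plus a limiting argument at the critical exponent---is precisely your plan, and it is also the scheme the paper itself adapts in the K\"ahler case (Lemma \ref{B} and Proposition \ref{C}). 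The one soft spot is your justification that the extremal is nonconstant (excluding constants from the domain of the quotient does not prevent a maximizing sequence from degenerating to a constant); the standard repair, which the paper invokes via Ch.~6.8.2 of \cite{BGL}, is that near-constant test functions give quotient values at most $\frac{p-2}{\lambda _{1}}\leq \frac{\left( n-1\right) \left( p-2\right) }{n\rho }$ by Lichnerowicz, so either the desired bound already holds or a nonconstant extremal exists.
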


This was proved by Bidaut-Veron and Veron in \cite{VV}, using a method
proposed in \cite{GS}. In this form, the inequality actually holds for $%
1\leq p\leq 2$ as well, which are called Beckner inequalities \cite{GZ}.
These interpolate between the Poincar\'e inequality for $p=1$ and the log
Sobolev inequality for $p\rightarrow 2$. 

On the class of Riemannian manifolds with $\mathrm{Ric}\geq \rho $, the constant $\frac{\left( n-1\right) \left( p-2\right) }{n\rho }$ in \eqref{sobo intro} can not be improved since it is optimal for the spheres, see for instance \cite{DEKL}.  Our goal in this paper is to improve the constant $\frac{\left( n-1\right) \left( p-2\right) }{n\rho }$ in Theorem \ref{theo intro} in the K\"{a}hler setting.
We have been able to do so for the entire range $1\leq p<\frac{2n}{n-2},$ by
using an improved integrated Bochner's inequality that takes into account the K\"{a}hler
structure (see Theorem \ref{improved CD}). The following is the main result of this paper.

\begin{theorem}
\label{SK}Let $\left( M^{m},g\right) $ be a compact K\"{a}hler manifold of complex
dimension $m\geq 2$ and with Ricci curvature $\mathrm{Ric}\geq \rho $ with $\rho >0$ and $%
\mathrm{Vol}\left( M\right) =1$. For any $2<p\leq \frac{2m}{m-1}$ we have
the Sobolev inequality 
\begin{equation*}
\left( \int_{M}\left\vert \phi \right\vert ^{p}\right) ^{\frac{2}{p}%
}-\int_{M}\phi ^{2}\leq C_{S}\int_{M}\left\vert \nabla \phi \right\vert ^{2},
\end{equation*}%
for any $\phi \in C^{\infty }\left( M\right) $, where 
\begin{equation*}
C_{S}=\frac{p-2}{\left( p-1\right) 2m\rho }\left( 2m+p+1-2\sqrt{\left(
m+1\right) \left( 2m-\left( m-1\right) p\right) }\right) .
\end{equation*}%
For any $1<p\leq 2$ we have the Beckner inequality 
\begin{equation*}
\int_{M}\phi ^{2}-\left( \int_{M}\phi ^{\frac{2}{p}}\right) ^{p}\leq
C_{B}\int_{M}\left\vert \nabla \phi \right\vert ^{2},
\end{equation*}%
for any $\phi \in C^{\infty }\left( M\right) $, where 
\begin{equation*}
C_{B}=\frac{p-1}{p}\frac{2m}{\left( \left( m-1\right) p+2\right) \rho }.
\end{equation*}
\end{theorem}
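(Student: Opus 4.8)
The plan is to adapt the nonlinear eigenvalue method of Bidaut-V\'eron--V\'eron \cite{VV}, feeding in the K\"ahler refinement of the Bochner formula from Theorem \ref{improved CD} in place of the classical one. I would begin with the Sobolev range $2<p\le\frac{2m}{m-1}$, and first treat $p$ strictly below the critical exponent $\frac{2m}{m-1}$, where compactness of the Sobolev embedding guarantees that
\[
C=\sup_{\phi}\frac{\left(\int_M|\phi|^p\right)^{2/p}-\int_M\phi^2}{\int_M|\nabla\phi|^2}
\]
is attained by a positive smooth function $u$. After normalizing $\int_M u^p=1$, the extremal solves the Euler--Lagrange equation
\[
C\,\Delta u=u-u^{p-1},
\]
and multiplying by $u$ and integrating gives $C\int_M|\nabla u|^2=1-\int_M u^2$, so that $u$ realizes equality. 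The target is thus the bound $C\le C_S$.

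To obtain it, I would test Theorem \ref{improved CD} against the weight $u^{s}$ with a free exponent $s$, bounding $\int_M(\Delta u)^2u^{s}$ from below by the improved curvature-dimension term together with $\mathrm{Ric}\ge\rho$. The point of working on a K\"ahler manifold is that the dimensional gain in Theorem \ref{improved CD} replaces the Riemannian constant attached to $n=2m$ by the better constant attached to $m+1$. Repeatedly integrating by parts and substituting the equation $C\,\Delta u=u-u^{p-1}$ eliminates the Hessian and Laplacian terms and reduces the whole expression, through the identities produced by multiplying the equation by powers of $u$, to a small number of explicit integrals. Collecting everything yields an inequality of the schematic form
\[
0\le\int_M u^{s}\Bigl|\,\nabla^2u-(\text{trace and K\"ahler corrections})\,\Bigr|^2+Q(s,C)\,\int_M u^{p+s-2}|\nabla u|^2,
\]
where $Q(s,C)$ is a quadratic expression in $s$ whose sign, for the sharp $C$, must be controlled.

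The bound $C\le C_S$ then comes from optimizing over the free exponent $s$: the optimal $s$ is the one that turns the associated quadratic into a perfect square, and its discriminant is precisely the factor $\sqrt{(m+1)(2m-(m-1)p)}$ in the statement, so that the K\"ahler gain $(m+1)$ is what improves the Riemannian constant $\frac{(n-1)(p-2)}{n\rho}$ of Theorem \ref{theo intro}. The critical endpoint $p=\frac{2m}{m-1}$, where the square root vanishes, would be recovered by letting $p$ increase to the critical exponent in the subcritical inequalities. For the Beckner range $1<p\le2$ I would run the identical scheme for the functional $\int_M\phi^2-\left(\int_M\phi^{2/p}\right)^p$, whose extremal solves $C\,\Delta u=u^{2/p-1}-u$ with a sublinear nonlinearity; there the admissible exponents make the optimization degenerate, the discriminant collapses, and one is left with the rational constant $C_B$.

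I expect the main obstacle to lie in the extra term carried by Theorem \ref{improved CD} beyond the scalar $(\Delta u)^2$ contribution, namely the term built from the complex structure applied to $\nabla u$. The entire computation closes only if this term can itself be integrated by parts and re-expressed through the Euler--Lagrange equation with a sign compatible with the optimization over $s$; keeping track of it, rather than discarding it, is exactly what produces the improvement over the Riemannian constant. A secondary difficulty is justifying the existence and regularity of extremals and the limiting passage to the critical exponent.
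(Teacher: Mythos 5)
For the Sobolev range $2<p\le\frac{2m}{m-1}$ your plan is essentially the paper's proof (Lemma \ref{B} and Proposition \ref{C}): extremal function via the argument of Ch.~6.8.2 of \cite{BGL}, Euler--Lagrange equation $\Delta f=\frac{1}{\Lambda}(f-f^{p-1})$, then the first inequality of Theorem \ref{improved CD} with $q=0$ applied to powers of the extremal ($u=f^{1/r}$, which is your weight exponent in disguise), and a discriminant condition producing $\sqrt{(m+1)(2m-(m-1)p)}$. Two points of your description are off, though neither is fatal. First, the square root does not come from optimizing the power exponent: the quadratic $\Theta(r)$ in the substitution exponent $r$ merely needs a real root, which holds iff $p\le 1+\frac{m+1}{m-1}\frac{4k}{(k+1)^2}$; the square root in $C_S$ appears when this constraint is solved at equality for the K\"ahler parameter $k$, i.e.\ the parameter weighting $|u_{\alpha\beta}|^2$ against $|u_{\alpha\bar\beta}|^2$ inside Theorem \ref{improved CD}. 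Second, your anticipated ``main obstacle'' --- an extra term built from $J\nabla u$ --- does not exist: Theorem \ref{improved CD} as stated is already a closed scalar inequality among $\int(\Delta u)^2u^q$, $\int|\nabla u|^2u^q$, $\int|\nabla u|^2(\Delta u)u^{q-1}$ and $\int|\nabla u|^4u^{q-2}$; the complex structure has been entirely absorbed into the free parameter $k$. Also, at the endpoint $p=\frac{2m}{m-1}$ the constraint forces $k=1$ and $C_S$ reduces to the Riemannian constant of Theorem \ref{theo intro}, so no separate limiting argument is needed there.

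The genuine gap is your treatment of the Beckner range $1<p\le 2$. The paper does \emph{not} use extremals there: Proposition \ref{D} runs the heat semigroup from $\phi^{2/p}$ following \cite{GZ}, sets $u=f^{p-1}$ and $q=\frac{2-p}{p-1}$ (so the weight exponent is \emph{dictated by the flow}, not free), invokes the second (Hessian) inequality of Theorem \ref{improved CD} with $a$ chosen so that $A_2=0$ and with $\sigma=1+\frac{q}{2m}$, verifies positivity of an explicit cubic $\mathcal{E}(\sigma)$, and integrates the resulting differential inequality $\Lambda''\ge\frac{2\rho((m-1)p+2)}{m}(-\Lambda')$ twice; there is no discriminant anywhere in this mechanism, so your claim that ``the discriminant collapses and one is left with $C_B$'' is unsupported and does not match how $C_B$ arises. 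Moreover, your extremal scheme for $C\,\Delta u=u^{2/p-1}-u$ faces two concrete obstructions. The nonlinearity is sublinear ($2/p-1\in(0,1)$), so the positivity, nondegeneracy and smoothness of an extremizer --- which the Bochner computation of Lemma \ref{B} requires, since one divides by $u$ --- cannot be obtained from the $p>2$ machinery of \cite{BGL}; nonnegative solutions of sublinear problems may vanish on sets (dead cores), and the maximum-principle argument giving $u>0$ is unavailable. And even granting a positive smooth solution, rerunning the algebra of Lemma \ref{B} with effective exponent $2/p\in(1,2)$ flips the sign of the factor $p-2$ in the final step \eqref{a14.1}, so instead of the lower bound $C\ge\frac{2m\rho}{(m+(m-1)k)(p-2)}$ one gets an inequality of the wrong sign, which rules out nonconstant positive solutions rather than producing the $p$-dependent constant $C_B$. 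The heat-flow proof is precisely what circumvents both issues, since it never needs an extremizer at all.
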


Let us note that Sobolev inequality on K\"{a}hler manifolds improves that in
Theorem \ref{SR} for all $1 \le p<\frac{2n}{n-2}=\frac{2m}{m-1}$. In the
critical case $p=\frac{2n}{n-2}$ the optimal Sobolev constant is related to the
Yamabe invariant, and so it cannot be improved in the K\"{a}hler setting.
Indeed, by Obata theorem \cite{O}, any non-spherical Einstein metric is the
unique constant scalar curvature metric in its conformal class, which
implies that K\"{a}hler Einstein metrics have the same Sobolev constants as
spheres.

The Beckner inequalities also refine those from Theorem \ref{SR}. In
particular, letting $p\rightarrow 1$, we have a log-Sobolev inequality of the
form 
\begin{equation*}
\int_{M}\phi ^{2}\ln \phi ^{2}-\int_{M}\phi ^{2}\ln \int_{M}\phi ^{2}\leq 
\frac{2m}{\left( m+1\right) \rho }\int_{M}\left\vert \nabla \phi \right\vert
^{2},
\end{equation*}%
and for $p=2$ a sharp Poincar\'{e} inequality of the form 
\begin{equation*}
\int_{M}\phi ^{2}-\left( \int_{M}\phi \right) ^{2}\leq \frac{1}{2\rho }%
\int_{M}\left\vert \nabla \phi \right\vert ^{2}.
\end{equation*}
It is not clear to us at this point if the Beckner-Sobolev
constants we obtain  are optimal for other values of $p$ than 2. Let us however point out that according to Theorem 1.3 in \cite{SC},  for the log-Sobolev inequality $p=1$, the constant $\frac{2m}{\left( m+1\right) \rho }$ is asymptotically sharp in the case of the complex projective space $\mathbb{CP}^m$ when $m \to +\infty$.

As an application of these results, we use a theory developed by Bakry and
Ledoux \ \cite{BL} to estimate the diameter of such manifolds. The
Bonnet-Myers diameter estimate asserts that a complete $n$-dimensional Riemannian
manifold $\left( M^{n},g\right) $ with Ricci curvature lower bound $\mathrm{%
Ric}\geq n-1$ is compact and has a sharp diameter upper bound $\mathrm{diam}%
\left( M\right) \leq \pi $. By Cheng's theorem, equality is achieved if $M$
is a sphere of constant sectional curvature. Since the rigidity in
Bonnet-Myers theorem does not apply to K\"{a}hler manifolds for complex
dimension $m\geq 2$, it is a natural question to study the diameter bound in
this setting. Several results are known. If $\left( M^{m},g\right) $ has
bisectional curvature bounded by $BK\geq 1,$ it is known \cite{LW} that $%
\mathrm{diam}\left( M\right) \leq \frac{\pi }{2}$, this result is sharp and
equality is achieved by the complex projective space $\mathbb{CP}^{m}$.
Rigidity in this estimate has been studied recently in \cite{TY, LY}. In
fact, it should be noted that this sharp estimate holds under the weaker
lower bound on holomorphic sectional curvature, \cite{T}.

On the other hand, one cannot replace the bisectional curvature by Ricci
curvature in the K\"{a}hler case, because the K\"{a}hler-Einstein metric on $%
\mathbb{CP}^{1}\times $ $\mathbb{CP}^{1}\times ..\times $ $\mathbb{CP}^{1}$
has larger diameter than $\mathbb{CP}^{m}$. A sharp diameter estimate for K%
\"{a}hler manifolds $\left( M^{m},g\right) $ with $\mathrm{Ric}\geq 2m-1$ is
currently not known, but Liu proved \cite{L} that there exists a constant $%
\varepsilon \left( m\right) $ depending only on $m$ so that $\mathrm{diam}%
\left( M\right) \leq \pi -\varepsilon \left( m\right) $, for any K\"{a}hler
manifold satisfying $\mathrm{Ric}\geq 2m-1.$ However, the precise dependency
of $\varepsilon $ on the dimension $m$ in \cite{L} is somewhat difficult to
state.

As an application of Theorem \ref{SK} we obtain the following.

\begin{theorem}
\label{A}Let $\left( M^{m},g\right) $ be a K\"{a}hler manifold of complex
dimension $m\geq 2$ and with Ricci curvature $\mathrm{Ric}\geq 2m-1$. Then 
\begin{equation*}
\mathrm{diam}\left( M\right) \leq \pi \left( 1-\frac{1}{24m}\right).
\end{equation*}
\end{theorem}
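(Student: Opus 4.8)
The plan is to feed the improved inequalities of Theorem \ref{SK} into the functional route to Myers' theorem developed by Bakry and Ledoux \cite{BL}, which converts a Beckner--Sobolev inequality with a given constant into an upper bound on the diameter. The calibration of this machinery is dictated by the round sphere: on the class $\mathrm{Ric}\ge\rho$ the constant $\frac{(n-1)(p-2)}{n\rho}$ of Theorem \ref{theo intro} is attained by the sphere, whose diameter is exactly the Bonnet--Myers value $\pi\sqrt{(n-1)/\rho}$. Thus the Bakry--Ledoux estimate should be arranged so that, when one inserts the sharp Riemannian constant at any admissible exponent $p$, it returns precisely $\pi\sqrt{(n-1)/\rho}$; any strict improvement of the constant then produces a strictly smaller diameter bound.

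First I would specialize to the K\"ahler setting with $n=2m$ and $\rho=2m-1$, so that the competing Riemannian bound is exactly $\pi$. Writing $C^{\mathrm{Riem}}(p)=\frac{(n-1)(p-2)}{n\rho}=\frac{(2m-1)(p-2)}{2m\rho}$, the gain of Theorem \ref{SK} is measured by the ratio $\alpha(p):=C_S(p)/C^{\mathrm{Riem}}(p)$. A direct computation gives $\alpha(p)=\frac{2m+p+1-2\sqrt{(m+1)(2m-(m-1)p)}}{(p-1)(2m-1)}$, and one checks that $\alpha\big(\tfrac{2m}{m-1}\big)=1$, reflecting the fact, noted after Theorem \ref{SK}, that at the critical exponent the K\"ahler and spherical Sobolev constants coincide. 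For $2<p<\tfrac{2m}{m-1}$ one has $\alpha(p)<1$, which is exactly the room that will be converted into a diameter gain.

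Next I would run the Bakry--Ledoux diameter estimate with $C=C_S(p)$ (respectively $C_B(p)$ on the Beckner range) in place of the sharp Riemannian constant, and then optimize the resulting bound over the admissible exponent $p$. Two features guide this optimization. At the critical exponent $p=\tfrac{2m}{m-1}$ there is no gain ($\alpha=1$), so the optimal exponent $p^\ast$ must lie strictly inside the interval; on the other hand as $p\downarrow 2$ the Sobolev functional degenerates (its constant carries a factor $p-2$), so the endpoint $p=2$ is likewise not optimal and one is forced to an interior $p^\ast$. Carrying out the one--variable minimization and expanding in powers of $1/m$ should collapse the bound to $\pi\big(1-\tfrac{1}{24m}+o(1/m)\big)$, and a careful treatment of the remainder should upgrade this to the clean inequality $\mathrm{diam}(M)\le\pi\big(1-\tfrac{1}{24m}\big)$.

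The main obstacle is the quantitative diameter--constant relation itself: unlike the linearized inequality, which only returns the spectral gap, the diameter is governed by the full nonlinear Beckner--Sobolev inequality, and the dependence of the diameter bound on the constant is very flat near the spherical value --- a sizeable improvement of $\alpha(p)$ (of order $1/\sqrt m$ for $p$ near $2$) is compressed into a diameter gain of order $1/m$. Making the Bakry--Ledoux estimate explicit enough to detect this gain, and controlling the error terms in the $p$--optimization and the $1/m$--expansion uniformly in $m\ge 2$ so as to land on the stated constant $\tfrac{1}{24m}$ rather than a weaker $\varepsilon(m)$ of the type obtained in \cite{L}, is where the real work lies.
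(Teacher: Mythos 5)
Your overall strategy is the paper's: feed the improved Sobolev constant of Theorem \ref{SK} (really, the one-parameter family of Proposition \ref{C}) into the Bakry--Ledoux diameter theorem and tune the exponent. But your calibration premise is false, and the error sits exactly where the proof is delicate. The estimate from \cite{BL} that the paper quotes is explicit: if $\left( \int_{M}\left\vert \phi \right\vert ^{p}\right)^{2/p}-\int_{M}\phi ^{2}\leq A\int_{M}\left\vert \nabla \phi \right\vert ^{2}$ for some $p>2$, then $\mathrm{diam}\left( M\right) \leq \pi \sqrt{2pA}/\left( p-2\right) $. Inserting the sharp Riemannian constant $A=\frac{\left( n-1\right) \left( p-2\right) }{n\rho }$ gives $\pi \sqrt{\frac{2p\left( n-1\right) }{n\left( p-2\right) \rho }}$, which equals the Bonnet--Myers value $\pi \sqrt{\left( n-1\right) /\rho }$ only when $\frac{2p}{n\left( p-2\right) }=1$, i.e.\ only at the critical exponent $p=\frac{2n}{n-2}$; at every subcritical $p$ it is \emph{strictly worse}. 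So your claim that the machinery ``returns precisely $\pi\sqrt{(n-1)/\rho}$ at any admissible exponent,'' and hence that ``any strict improvement of the constant produces a strictly smaller diameter bound,'' is wrong at fixed $p$: the K\"ahler gain $1-\alpha(p)$ must overcome the subcritical loss factor $\frac{2p}{n(p-2)}>1$, and since $\alpha(p)\to 1$ as $p\to p_{c}=\frac{2m}{m-1}$, whether anything survives this competition is precisely the quantitative question your plan leaves open. Your heuristic that the usable improvement is the ``order $1/\sqrt{m}$ gain near $p=2$'' also points at the wrong region: near $p=2$ the Bakry--Ledoux bound blows up like $(p-2)^{-1/2}$, and the actual gain is harvested with $p$ within $O(1/m^{2})$ of $p_{c}$, where the square-root singularity of $C_{S}$ in $p_{c}-p$ (coming from the term $\sqrt{(m+1)(2m-(m-1)p)}$) makes the constant drop faster than the subcritical penalty grows.

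Concretely, the paper resolves this by a specific near-critical choice rather than an asymptotic expansion: with $p=1+\frac{m+1}{m-1}\frac{4k}{(k+1)^{2}}$ the bound becomes $\mathrm{diam}(M)\leq \frac{\pi }{\sqrt{\rho }}\sqrt{\frac{p\left( m+\left( m-1\right) k\right) }{m\left( p-2\right) }}$, and setting $k=1-\frac{1}{2m}$ one checks by elementary algebra that $\mathcal{S}=m(2m-1)(p-2)-p\left( m+(m-1)k\right) =\Psi /(k+1)^{2}$ with $\Psi \geq 2$ for all $m\geq 2$, which yields the uniform gain $\sqrt{2m-1}-\sqrt{\frac{p\left( m+(m-1)k\right) }{m\left( p-2\right) }}\geq \frac{\sqrt{2m-1}}{24m}$; taking $\rho =2m-1$ gives Theorem \ref{A}. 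Everything your proposal defers (``should collapse,'' ``careful treatment of the remainder'') --- the choice of exponent, the balancing computation, and the verification that the constant $\frac{1}{24}$ works for every $m\geq 2$ rather than only asymptotically --- \emph{is} the proof; by contrast, the step you identify as the main obstacle (making Bakry--Ledoux explicit) is a ready-made quoted estimate. One further slip: the Beckner constants $C_{B}$ for $1<p\leq 2$ cannot be fed into this machinery, which requires $p>2$; the paper's separate route through the spectral gap $\lambda _{1}\geq 2\left( 2m-1\right) $ and Cheng's comparison (Proposition \ref{P}, Theorem \ref{A''}) yields only the weaker gain $\frac{1}{200\sqrt{m}\ln m}$, confirming that the $p=2$ information alone does not reach $\frac{1}{24m}$.
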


The organization of the paper is as follows. In Section \ref{Ineq} we
present a new Bochner type argument for K\"{a}hler manifolds. This is used
in Section \ref{BS} to prove Theorem \ref{SK}. Finally, the diameter
estimate is proved in Section \ref{Diam}.

\

\textbf{Acknowledgements.} It is our pleasure to thank Xiaodong Wang and
Jiaping Wang for useful discussions and for their interest in this work.

\section{A differential inequality for K\"{a}hler manifolds\label{Ineq}}

Let us set the notation that will be used throughout this note. Let $\left(
M^{m},g\right) $ be a compact K\"{a}hler manifold of complex dimension $m$. The K%
\"{a}hler metric $ds^{2}=g_{\alpha \bar{\beta}}dz^{\alpha }d\bar{z}^{\beta }$
defines a Riemannian metric by $\mathrm{Re}\left( ds^{2}\right) $. Hence, if 
$\left\{ e_{k}\right\} _{k=1,2m}$ is an orthonormal frame so that $%
e_{2k}=Je_{2k-1}$, then $\left\{ \nu _{\alpha }\right\} _{\alpha =1,m}$ 
\begin{equation*}
\nu _{\alpha }=\frac{1}{2}\left( e_{2\alpha -1}-\sqrt{-1}e_{2\alpha }\right)
\end{equation*}%
is a unitary frame. It follows that in this normalization of the Riemannian
metric, we have 
\begin{eqnarray*}
\Delta f &=&4f_{\alpha \bar{\alpha}} \\
\left\langle \nabla f,\nabla h\right\rangle &=&2\left( f_{\alpha }h_{\bar{%
\alpha}}+f_{\bar{\alpha}}h_{\alpha }\right) .
\end{eqnarray*}%
A lower bound for Ricci curvature $\mathrm{Ric}\geq \rho $ means that $%
\mathrm{Ric}\left( e_{j},e_{k}\right) \geq \rho \delta _{jk}$, or in the
unitary frame $\left\{ \nu _{\alpha }\right\} _{\alpha =1,m}$ that $%
R_{\alpha \bar{\beta}}\geq \frac{1}{2}\rho \delta _{\alpha \bar{\beta}}$.

Let $a\in \mathbb{R}$ and $k,q\geq 0$ be arbitrary. We fix the following
constants 
\begin{eqnarray}
A_{1} &=&2\left( m+1\right) ak-\left( 1+2k\right) mq  \label{A1} \\
B_{1} &=&\left( \left( m-1\right) +mk\right) ka^{2}+\left( 2-\left(
k+1\right) q\right) mka  \notag \\
&&+\frac{m}{4}\left( 1-k\right) ^{2}q^{2}+kmq\left( q-1\right) ,  \notag
\end{eqnarray}%
and 
\begin{eqnarray}
A_{2} &=&2\left( m+1\right) ak+\frac{1}{2}\left( 1-k\right) mq-\frac{3}{2}kq
\label{A2} \\
B_{2} &=&\left( \left( m-1\right) +mk\right) ka^{2}+\left( 2-\left(
k+1\right) q\right) mka  \notag \\
&&+\frac{m}{4}\left( 1-k\right) ^{2}q^{2}+\frac{q\left( q-1\right) }{2}%
\left( m\left( k-1\right) +k\right) .  \notag
\end{eqnarray}%
We have the following integral estimates which are the keys to our results and improve upon the integrated curvature dimension type inequality available on arbitrary Riemannian manifolds.

\begin{theorem}\label{improved CD}
\label{Bochner}Let $\left( M^{m},g\right) $ be a compact K\"{a}hler manifold of
complex dimension $m\geq 2$ and with Ricci curvature $\mathrm{Ric}\geq \rho$ for some $\rho >0$. For any function $u>0,$ any $a\in \mathbb{R}$ and $k,q\geq 0,$ we have 
\begin{eqnarray*}
\left( m+\left( m-1\right) k\right) \int_{M}\left( \Delta u\right) ^{2}u^{q}
&\geq &2m\rho \int_{M}\left\vert \nabla u\right\vert
^{2}u^{q}+A_{1}\int_{M}\left\vert \nabla u\right\vert ^{2}\left( \Delta
u\right) u^{q-1} \\
&&-B_{1}\int_{M}\left\vert \nabla u\right\vert ^{4}u^{q-2},
\end{eqnarray*}%
where $A_{1}$ and $B_{1}$ are specified in (\ref{A1}). If $0\leq k\leq \frac{%
m}{m-1},$ then for any $u>0$ and any $q\geq 0$ we have%
\begin{eqnarray*}
\left( m+\left( m-1\right) k\right) \int_{M}\left\vert \nabla
^{2}u\right\vert ^{2}u^{q} &\geq &\left( m-\left( m-1\right) k\right) \rho
\int_{M}\left\vert \nabla u\right\vert ^{2}u^{q}+A_{2}\int_{M}\left\vert
\nabla u\right\vert ^{2}\left( \Delta u\right) u^{q-1} \\
&&-B_{2}\int_{M}\left\vert \nabla u\right\vert ^{4}u^{q-2},
\end{eqnarray*}%
for $A_{2}$ and $B_{2}$ specified in (\ref{A2}).
\end{theorem}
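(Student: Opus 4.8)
The plan is to derive both inequalities from the Bochner formula on the Kähler manifold, written in the unitary frame so that the complex structure can be exploited. Starting from the pointwise identity $\frac12\Delta|\nabla u|^2 = |\nabla^2 u|^2 + \langle\nabla u,\nabla\Delta u\rangle + \mathrm{Ric}(\nabla u,\nabla u)$ and passing to complex notation, I would split the Hessian into its holomorphic part $\sum_{\alpha,\beta}|u_{\alpha\beta}|^2$ and its mixed part $\sum_{\alpha,\beta}|u_{\alpha\bar\beta}|^2$. The essential Kähler gain is that the mixed part has trace $\sum_\alpha u_{\alpha\bar\alpha}=\frac14\Delta u$ over only $m$ complex directions, so Cauchy--Schwarz yields $\sum_{\alpha,\beta}|u_{\alpha\bar\beta}|^2\ge\frac{(\Delta u)^2}{16m}$ --- a bound governed by the complex dimension $m$ rather than the real dimension $2m$ used in the Riemannian Bochner argument. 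This is precisely what will let the constants beat the $\frac{(n-1)(p-2)}{n\rho}$ of Theorem \ref{theo intro}.

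First I would multiply the Bochner identity by $u^q$ and integrate over the closed manifold $M$ (all integrands being well defined since $u>0$). The left-hand side, after integrating by parts twice and using $\Delta(u^q)=q\,u^{q-1}\Delta u+q(q-1)u^{q-2}|\nabla u|^2$, turns into a combination of $\int_M|\nabla u|^2(\Delta u)u^{q-1}$ and $\int_M|\nabla u|^4u^{q-2}$. The curvature term is bounded below using $R_{\alpha\bar\beta}\ge\frac\rho2\delta_{\alpha\bar\beta}$ to produce $\rho\int_M|\nabla u|^2u^q$, and the term $\int_M\langle\nabla u,\nabla\Delta u\rangle u^q$ is integrated by parts into $-\int_M(\Delta u)^2u^q-q\int_M|\nabla u|^2(\Delta u)u^{q-1}$. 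At this stage every quantity is expressed through the four model integrals $\int_M(\Delta u)^2u^q$, $\int_M|\nabla u|^2(\Delta u)u^{q-1}$, $\int_M|\nabla u|^4u^{q-2}$ and $\int_M|\nabla u|^2u^q$, together with the two manifestly nonnegative Hessian integrals $\int_M|u_{\alpha\beta}|^2u^q$ and $\int_M|u_{\alpha\bar\beta}|^2u^q$.

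The parameters $a$ and $k$ enter through two families of nonnegative integrals. For the holomorphic Hessian I would use $\int_M\big|u_{\alpha\beta}-a\,u_\alpha u_\beta/u\big|^2u^q\ge0$, and for the mixed Hessian the analogous $\int_M\big|u_{\alpha\bar\beta}-a\,u_\alpha u_{\bar\beta}/u\big|^2u^q\ge0$; expanding each square, the cross terms integrate by parts back into the four model integrals while the last term contributes a fixed multiple of $a^2\int_M|\nabla u|^4u^{q-2}$. This is the source of the quadratic-in-$a$ shape of $B_1$ and $B_2$ and of the linear-in-$a$, linear-in-$q$ shape of $A_1$ and $A_2$. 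For the first inequality the mixed part is reduced to $(\Delta u)^2$ via the Cauchy--Schwarz bound above while the holomorphic square is incorporated with weight $k\ge0$, which together account for the left-hand coefficient $m+(m-1)k$; for the second inequality the full Hessian $|\nabla^2u|^2$ is retained, and the admissible range $0\le k\le\frac{m}{m-1}$ is exactly the condition guaranteeing that the companion curvature coefficient $m-(m-1)k$ remain nonnegative.

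The final step is to collect the coefficients of each of the four model integrals and check that, after weighting the holomorphic square by $k$ and combining with the integrated Bochner identity, they reduce precisely to $A_1,B_1$ and $A_2,B_2$ as defined in \eqref{A1} and \eqref{A2}. I expect the main obstacle to be this bookkeeping: tracking the several boundary-free integration-by-parts identities relating $\int_M|\nabla u|^2(\Delta u)u^{q-1}$ and $\int_M|\nabla u|^4u^{q-2}$ (which are not independent, since commuting covariant derivatives and integrating by parts ties them together) and verifying the exact cancellations that produce the stated closed-form constants. The conceptual content lies entirely in the refined Kähler Bochner formula and the complex-dimension Cauchy--Schwarz bound; the remainder is a careful but routine optimization over how the two nonnegative squares are combined.
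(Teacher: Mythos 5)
Your plan follows the same architecture as the paper's proof (perturbed Hessian squares in a unitary frame, the trace inequality over $m$ complex directions, a weight $k$ on the holomorphic block, and closing with the integrated Riemannian Bochner formula), but two concrete choices in it would fail to deliver the stated constants $A_1,B_1,A_2,B_2$, so as written there is a genuine gap. The main one: you use a single parameter $a$ in both completed squares, whereas the paper perturbs the mixed Hessian by $a\,u_\alpha u_{\bar\beta}/u$ and the holomorphic Hessian by an \emph{independent} $b\,u_\alpha u_{\bar\beta}/u$-type term. This second parameter is not cosmetic, because your claim that the cross terms ``integrate by parts back into the four model integrals'' is false: integrating $\int_M (u_{\alpha\beta}u_{\bar\alpha}u_{\bar\beta})u^{q-1}$ by parts produces the \emph{mixed} cross term $\int_M (u_{\bar\alpha\beta}u_\alpha u_{\bar\beta})u^{q-1}$ (the paper's (\ref{z3})), and eliminating that one requires the Kähler commutation identity behind (\ref{z6}) (no $(2,0)$ curvature component, so $u_{\alpha\bar\beta\beta}$ yields $\nabla\Delta u$ with no Ricci term), which reintroduces $\int_M\vert u_{\alpha\bar\beta}\vert^2u^q$ with coefficient $1-\frac{2}{q}\left(a-\frac{b}{k}\right)$. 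For the two Hessian blocks to reassemble into $\frac{1}{8k}\vert\nabla^2u\vert^2$ this coefficient must equal $\frac{1}{k}$, forcing $b=ka+(1-k)\frac{q}{2}$; with $b=a$ this is possible only when $k=1$ or $a=\frac{q}{2}$, which collapses the two-parameter family. Since the applications (Lemma \ref{B} with $q=0$ and $a$ given by (\ref{a13}), Proposition \ref{D} with $a$ given by (\ref{e7})) need $a$ and $k$ to vary independently, your one-parameter version would not suffice.

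Secondly, you invoke Cauchy--Schwarz on the \emph{unperturbed} mixed Hessian, $\sum\vert u_{\alpha\bar\beta}\vert^2\geq\frac{(\Delta u)^2}{16m}$, together with the separate nonnegative square $\vert u_{\alpha\bar\beta}-a\,u_\alpha u_{\bar\beta}/u\vert^2\geq 0$. The paper applies the trace inequality directly to the perturbed matrix, $\left\vert u_{\alpha\bar\beta}+a\frac{u_\alpha u_{\bar\beta}}{u}\right\vert^2\geq\frac{1}{16m}\left\vert \Delta u+a\frac{\vert\nabla u\vert^2}{u}\right\vert^2$, which simultaneously produces the full-weight $(\Delta u)^2$ term, the cross term, the linear term $\frac{a}{8m}\frac{\vert\nabla u\vert^2}{u}\Delta u$, and the reduced quadratic penalty $\frac{a^2}{16}\frac{m-1}{m}$ in place of $\frac{a^2}{16}$. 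No nonnegative combination of your two inequalities reproduces this: any weighting trades the coefficient of $(\Delta u)^2$ against that of the cross term, misses the linear-in-$a$ Laplacian term entirely, and carries the full $a^2$ penalty, so the resulting constants are strictly worse than $A_1,B_1$. The rest of your outline is sound and matches the paper: the $u^q$ weighting, the identification of $0\leq k\leq\frac{m}{m-1}$ with nonnegativity of the coefficient $m-(m-1)k$ (more precisely, of $\frac{m-(m-1)k}{2m}$ in front of $\int_M(\Delta u)^2u^q$ in (\ref{z8}), needed to substitute (\ref{z11}) into it), and the single use of $\mathrm{Ric}\geq\rho$ through the integrated Bochner formula.
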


\begin{proof}
It suffices to prove the theorem for any $k,q>0$, and any $a\in \mathbb{R}$.
We have by the arithmetic-mean inequality that 
\begin{equation*}
\left\vert u_{\alpha \bar{\beta}}+a\frac{u_{\alpha }u_{\bar{\beta}}}{u}%
\right\vert ^{2}\geq \frac{1}{16m}\left\vert \Delta u+a\frac{\left\vert
\nabla u\right\vert ^{2}}{u}\right\vert ^{2}.
\end{equation*}%
Expanding the terms, this implies 
\begin{eqnarray*}
&&\left\vert u_{\alpha \bar{\beta}}\right\vert ^{2}+2a\frac{u_{\alpha \bar{%
\beta}}u_{\bar{\alpha}}u_{\beta }}{u}+\frac{a^{2}}{16}\frac{m-1}{m}\frac{%
\left\vert \nabla u\right\vert ^{4}}{u^{2}} \\
&\geq &\frac{1}{16m}\left( \Delta u\right) ^{2}+\frac{a}{8m}\frac{\left\vert
\nabla u\right\vert ^{2}}{u}\Delta u.
\end{eqnarray*}%
Multiplying this with $u^{q}$ it follows that%
\begin{eqnarray}
\int_{M}\left\vert u_{\alpha \bar{\beta}}\right\vert ^{2}u^{q} &\geq &\frac{1%
}{16m}\int_{M}\left( \Delta u\right) ^{2}u^{q}+\frac{a}{8m}%
\int_{M}\left\vert \nabla u\right\vert ^{2}\left( \Delta u\right) u^{q-1}
\label{z1} \\
&&-2a\int_{M}\left( u_{\alpha \bar{\beta}}u_{\bar{\alpha}}u_{\beta }\right)
u^{q-1}  \notag \\
&&-\frac{a^{2}}{16}\frac{m-1}{m}\int_{M}\left\vert \nabla u\right\vert
^{4}u^{q-2}.  \notag
\end{eqnarray}%
For $b\in \mathbb{R}$ to be specified later we note the following inequality 
\begin{equation*}
\left\vert u_{\alpha \beta }\right\vert ^{2}+2b\frac{\mathrm{Re}\left(
u_{\alpha \beta }u_{\bar{\alpha}}u_{\bar{\beta}}\right) }{u}+\frac{b^{2}}{16}%
\frac{\left\vert \nabla u\right\vert ^{4}}{u^{2}}\geq 0,
\end{equation*}%
where $\mathrm{Re}\left( w\right) =\frac{1}{2}\left( w+\bar{w}\right) $
denotes the real part of $w$. We get that 
\begin{equation}
\int_{M}\left\vert u_{\alpha \beta }\right\vert ^{2}u^{q}\geq -2b\int_{M}%
\mathrm{Re}\left( u_{\alpha \beta }u_{\bar{\alpha}}u_{\bar{\beta}}\right)
u^{q-1}-\frac{b^{2}}{16}\int_{M}\left\vert \nabla u\right\vert ^{4}u^{q-2}.
\label{z2}
\end{equation}%
Integrating by parts, we obtain the following 
\begin{eqnarray*}
-\int_{M}\left( u_{\alpha \beta }u_{\bar{\alpha}}u_{\bar{\beta}}\right)
u^{q-1} &=&\int_{M}u_{\alpha }\left( \left( u_{\bar{\alpha}}u_{\bar{\beta}%
}\right) u^{q-1}\right) _{\beta } \\
&=&\int_{M}\left( u_{\bar{\alpha}\beta }u_{\alpha }u_{\bar{\beta}}\right)
u^{q-1}+\frac{1}{16}\int_{M}\left\vert \nabla u\right\vert ^{2}\left( \Delta
u\right) u^{q-1} \\
&&+\frac{q-1}{16}\int_{M}\left\vert \nabla u\right\vert ^{4}u^{q-2}.
\end{eqnarray*}%
Hence, we have 
\begin{eqnarray}
-\int_{M}\mathrm{Re}\left( u_{\alpha \beta }u_{\bar{\alpha}}u_{\bar{\beta}%
}\right) u^{q-1} &=&\int_{M}\left( u_{\bar{\alpha}\beta }u_{\alpha }u_{\bar{%
\beta}}\right) u^{q-1}  \label{z3} \\
&&+\frac{1}{16}\int_{M}\left\vert \nabla u\right\vert ^{2}\left( \Delta
u\right) u^{q-1}  \notag \\
&&+\frac{q-1}{16}\int_{M}\left\vert \nabla u\right\vert ^{4}u^{q-2}.  \notag
\end{eqnarray}%
Using this in (\ref{z2}) proves that 
\begin{eqnarray}
\int_{M}\left\vert u_{\alpha \beta }\right\vert ^{2}u^{q} &\geq
&2b\int_{M}\left( u_{\bar{\alpha}\beta }u_{\alpha }u_{\bar{\beta}}\right)
u^{q-1}+\frac{b}{8}\int_{M}\left\vert \nabla u\right\vert ^{2}\left( \Delta
u\right) u^{q-1}  \label{z4} \\
&&-\left( \frac{b^{2}}{16}+\frac{b\left( 1-q\right) }{8}\right)
\int_{M}\left\vert \nabla u\right\vert ^{4}u^{q-2}.  \notag
\end{eqnarray}%
Multiply (\ref{z4}) by $\frac{1}{k}>0$ and add to (\ref{z1}) to conclude%
\begin{eqnarray}
&&\int_{M}\left\vert u_{\alpha \bar{\beta}}\right\vert ^{2}u^{q}+\frac{1}{k}%
\int_{M}\left\vert u_{\alpha \beta }\right\vert ^{2}u^{q}  \label{z5} \\
&\geq &\frac{1}{16m}\int_{M}\left( \Delta u\right) ^{2}u^{q}+\left( \frac{a}{%
8m}+\frac{b}{8k}\right) \int_{M}\left\vert \nabla u\right\vert ^{2}\left(
\Delta u\right) u^{q-1}  \notag \\
&&-2\left( a-\frac{b}{k}\right) \int_{M}\left( u_{\alpha \bar{\beta}}u_{\bar{%
\alpha}}u_{\beta }\right) u^{q-1}  \notag \\
&&-\left( \frac{m-1}{16m}a^{2}+\frac{b^{2}}{16k}+\frac{b\left( 1-q\right) }{%
8k}\right) \int_{M}\left\vert \nabla u\right\vert ^{4}u^{q-2}.  \notag
\end{eqnarray}%
We now integrate by parts and use Ricci identities to get 
\begin{eqnarray}
\int_{M}\left\vert u_{\alpha \bar{\beta}}\right\vert ^{2}u^{q}
&=&-\int_{M}\left( u_{\alpha \bar{\beta}\beta }u_{\bar{\alpha}}\right)
u^{q}-q\int_{M}\left( u_{\alpha \bar{\beta}}u_{\bar{\alpha}}u_{\beta
}\right) u^{q-1}  \label{z6} \\
&=&-\frac{1}{16}\int_{M}\left\langle \nabla \Delta u,\nabla u\right\rangle
u^{q}-q\int_{M}\left( u_{\alpha \bar{\beta}}u_{\bar{\alpha}}u_{\beta
}\right) u^{q-1}  \notag \\
&=&\frac{1}{16}\int_{M}\left( \Delta u\right) ^{2}u^{q}+\frac{q}{16}%
\int_{M}\left\vert \nabla u\right\vert ^{2}\left( \Delta u\right) u^{q-1} 
\notag \\
&&-q\int_{M}\left( u_{\alpha \bar{\beta}}u_{\bar{\alpha}}u_{\beta }\right)
u^{q-1}.  \notag
\end{eqnarray}%
From (\ref{z6}) we get 
\begin{eqnarray*}
\int_{M}\left( u_{\alpha \bar{\beta}}u_{\bar{\alpha}}u_{\beta }\right)
u^{q-1} &=&-\frac{1}{q}\int_{M}\left\vert u_{\alpha \bar{\beta}}\right\vert
^{2}u^{q}+\frac{1}{16q}\int_{M}\left( \Delta u\right) ^{2}u^{q} \\
&&+\frac{1}{16}\int_{M}\left\vert \nabla u\right\vert ^{2}\left( \Delta
u\right) u^{q-1},
\end{eqnarray*}%
that we plug into (\ref{z5}) to obtain 
\begin{eqnarray}
&&\int_{M}\left( 1-\frac{2}{q}\left( a-\frac{b}{k}\right) \right) \left\vert
u_{\alpha \bar{\beta}}\right\vert ^{2}u^{q}+\frac{1}{k}\int_{M}\left\vert
u_{\alpha \beta }\right\vert ^{2}u^{q}  \label{z7} \\
&\geq &\left( \frac{1}{16m}-\frac{1}{8q}\left( a-\frac{b}{k}\right) \right)
\int_{M}\left( \Delta u\right) ^{2}u^{q}  \notag \\
&&+\left( -\frac{m-1}{8m}a+\frac{b}{4k}\right) \int_{M}\left\vert \nabla
u\right\vert ^{2}\left( \Delta u\right) u^{q-1}  \notag \\
&&-\left( \frac{m-1}{16m}a^{2}+\frac{b^{2}}{16k}+\frac{b\left( 1-q\right) }{%
8k}\right) \int_{M}\left\vert \nabla u\right\vert ^{4}u^{q-2}.  \notag
\end{eqnarray}%
We now set 
\begin{equation*}
b=ka+\left( 1-k\right) \frac{q}{2},
\end{equation*}%
for which 
\begin{equation*}
1-\frac{2}{q}\left( a-\frac{b}{k}\right) =\frac{1}{k}.
\end{equation*}%
Noting that 
\begin{equation*}
\left\vert \nabla ^{2}u\right\vert ^{2}=8\left( \left\vert u_{\alpha \bar{%
\beta}}\right\vert ^{2}+\left\vert u_{\alpha \beta }\right\vert ^{2}\right) ,
\end{equation*}%
we get from (\ref{z7}) that 
\begin{eqnarray}
\int_{M}\left\vert \nabla ^{2}u\right\vert ^{2}u^{q} &\geq &\frac{m-\left(
m-1\right) k}{2m}\int_{M}\left( \Delta u\right) ^{2}u^{q}  \label{z8} \\
&&+A\int_{M}\left\vert \nabla u\right\vert ^{2}\left( \Delta u\right) u^{q-1}
\notag \\
&&-B\int_{M}\left\vert \nabla u\right\vert ^{4}u^{q-2},  \notag
\end{eqnarray}%
where 
\begin{eqnarray}
A &=&\frac{m+1}{m}ak+\left( 1-k\right) q  \label{z8.1} \\
B &=&\frac{1}{2}\left( \frac{m-1}{m}+k\right) ka^{2}+\left( \frac{q}{2}%
\left( 1-k\right) +\left( 1-q\right) \right) ka  \notag \\
&&+\frac{1}{8}\left( 1-k\right) ^{2}q^{2}+\frac{1}{2}\left( 1-k\right)
q\left( 1-q\right) .  \notag
\end{eqnarray}

We now use the Ricci curvature lower bound by integrating $%
\int_{M}\left\vert \nabla ^{2}u\right\vert ^{2}u^{q}$ by parts. \ From 
\begin{equation*}
\frac{1}{2}\Delta \left\vert \nabla u\right\vert ^{2}=\left\vert \nabla
^{2}u\right\vert ^{2}+\left\langle \nabla \Delta u,\nabla u\right\rangle +%
\mathrm{Ric}\left( \nabla u,\nabla u\right)
\end{equation*}%
we obtain%
\begin{eqnarray*}
\int_{M}\left\vert \nabla ^{2}u\right\vert ^{2}u^{q} &\leq
&-\int_{M}\left\langle \nabla \Delta u,\nabla u\right\rangle u^{q}+\frac{1}{2%
}\int_{M}\left( \Delta \left\vert \nabla u\right\vert ^{2}\right) u^{q} \\
&&-\rho \int_{M}\left\vert \nabla u\right\vert ^{2}u^{q} \\
&=&\int_{M}\left( \Delta u\right) ^{2}u^{q}+q\int_{M}\left\vert \nabla
u\right\vert ^{2}\left( \Delta u\right) u^{q-1} \\
&&-\frac{q}{2}\int_{M}\left\langle \nabla \left\vert \nabla u\right\vert
^{2},\nabla u\right\rangle u^{q-1}-\rho \int_{M}\left\vert \nabla
u\right\vert ^{2}u^{q} \\
&=&\int_{M}\left( \Delta u\right) ^{2}u^{q}+\frac{3q}{2}\int_{M}\left\vert
\nabla u\right\vert ^{2}\left( \Delta u\right) u^{q-1} \\
&&+\frac{q\left( q-1\right) }{2}\int_{M}\left\vert \nabla u\right\vert
^{4}u^{q-2}-\rho \int_{M}\left\vert \nabla u\right\vert ^{2}u^{q}.
\end{eqnarray*}%
This proves that 
\begin{eqnarray}
\int_{M}\left( \Delta u\right) ^{2}u^{q} &\geq &\int_{M}\left\vert \nabla
^{2}u\right\vert ^{2}u^{q}-\frac{3q}{2}\int_{M}\left\vert \nabla
u\right\vert ^{2}\left( \Delta u\right) u^{q-1}  \label{z10} \\
&&-\frac{q\left( q-1\right) }{2}\int_{M}\left\vert \nabla u\right\vert
^{4}u^{q-2}+\rho \int_{M}\left\vert \nabla u\right\vert ^{2}u^{q}.  \notag
\end{eqnarray}%
Plugging (\ref{z8}) into (\ref{z10}) we conclude that 
\begin{eqnarray}
\left( m+\left( m-1\right) k\right) \int_{M}\left( \Delta u\right) ^{2}u^{q}
&\geq &2m\rho \int_{M}\left\vert \nabla u\right\vert ^{2}u^{q}  \label{z11}
\\
&&+A_{1}\int_{M}\left\vert \nabla u\right\vert ^{2}\left( \Delta u\right)
u^{q-1}  \notag \\
&&-B_{1}\int_{M}\left\vert \nabla u\right\vert ^{4}u^{q-2},  \notag
\end{eqnarray}%
where%
\begin{eqnarray}
A_{1} &=&2\left( m+1\right) ak-\left( 1+2k\right) mq  \label{z11.1} \\
B_{1} &=&\left( \left( m-1\right) +mk\right) ka^{2}+\left( 2-\left(
k+1\right) q\right) mka  \notag \\
&&+\frac{1}{4}\left( 1-k\right) ^{2}mq^{2}+mkq\left( q-1\right) .  \notag
\end{eqnarray}%
We now assume that 
\begin{equation*}
k\leq \frac{m}{m-1}.
\end{equation*}%
Plugging (\ref{z11}) into (\ref{z8}) we now get%
\begin{eqnarray}
\left( m+\left( m-1\right) k\right) \int_{M}\left\vert \nabla
^{2}u\right\vert ^{2}u^{q} &\geq &\left( m-\left( m-1\right) k\right) \rho
\int_{M}\left\vert \nabla u\right\vert ^{2}u^{q}  \label{z12} \\
&&+A_{2}\int_{M}\left\vert \nabla u\right\vert ^{2}\left( \Delta u\right)
u^{q-1}  \notag \\
&&-B_{2}\int_{M}\left\vert \nabla u\right\vert ^{4}u^{q-2},  \notag
\end{eqnarray}%
where 
\begin{eqnarray}
A_{2} &=&2\left( m+1\right) ak+\frac{1}{2}\left( 1-k\right) mq-\frac{3}{2}kq
\label{z12.1} \\
B_{2} &=&\left( \left( m-1\right) +mk\right) ka^{2}+\left( 2-\left(
k+1\right) q\right) mka  \notag \\
&&+\frac{m}{4}\left( 1-k\right) ^{2}q^{2}+\frac{q\left( q-1\right) }{2}%
\left( m\left( k-1\right) +k\right) .  \notag
\end{eqnarray}%
By (\ref{z11}) and (\ref{z12}) we obtain the result.
\end{proof}

Let us note that for $k=1$ and $q=0$, (\ref{z11}) recovers the Bochner
inequality on p. 338 of \cite{Le}, and for $k=1$ (\ref{z12}) recovers the
Bochner inequality on p.13 of \cite{GZ}.

\section{Beckner-Sobolev inequalities \label{BS}}

Recall that on a Riemannian manifold $\left( M^{n},g\right) $ with Ricci
curvature bounded below by $\mathrm{Ric}\geq \rho $ and volume normalized by 
$\mathrm{Vol}\left( M\right) =1,$ the following sharp Sobolev inequality
holds \cite{Le, VV}%
\begin{equation}
\left( \int_{M}\left\vert \phi \right\vert ^{p}\right) ^{\frac{2}{p}%
}-\int_{M}\phi ^{2}\leq \frac{\left( n-1\right) \left( p-2\right) }{n\rho }%
\int_{M}\left\vert \nabla \phi \right\vert ^{2},  \label{a1}
\end{equation}%
for any $\phi \in C^{\infty }\left( M\right) $ and $2<p\leq \frac{2n}{n-2}$.
Our goal is to improve (\ref{a1}) in the K\"{a}hler setting. As a
preparation, we prove the following result.

\begin{lemma}
\label{B}Let $\left( M^{m},g\right) $ be a compact K\"{a}hler manifold
satisfying $\mathrm{Ric}\geq \rho $ with $\rho >0$. Assume there exists a smooth
nonconstant positive solution $f$ of 
\begin{equation*}
\Delta f=C\left( f-f^{p-1}\right) ,
\end{equation*}%
for some $2<p\leq \frac{2m}{m-1}$ and some constant $C\in \mathbb{R}$. Then 
\begin{equation*}
C\geq \frac{2m\rho }{\left( m+\left( m-1\right) k\right) \left( p-2\right) },
\end{equation*}%
for any $k>0$ such that 
\begin{equation*}
p\leq 1+\frac{m+1}{m-1}\frac{4k}{\left( k+1\right) ^{2}}.
\end{equation*}
\end{lemma}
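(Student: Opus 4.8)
The plan is to feed the extremal function $u=f$ itself into the first integral inequality of Theorem~\ref{improved CD} and to remove every derivative of $f$ using the equation $\Delta f=C(f-f^{p-1})$, reducing everything to the three ``moments'' $X=\int_M f^{q+2}$, $Y=\int_M f^{q+p}$, $Z=\int_M f^{q+2p-2}$. Before doing so I would pin down the sign of $C$. Integrating the equation gives $\int_M(f-f^{p-1})=0$, and since $x\mapsto x-x^{p-1}$ is positive on $(0,1)$ and negative on $(1,\infty)$, the nonconstant $f$ must take values on both sides of $1$. Evaluating the equation at a maximum point, where $\Delta f\le 0$ while $f>1$ (so $f-f^{p-1}<0$), forces $C\ge 0$, hence $C>0$. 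This allows division by $C$ and, together with $\int_M|\nabla f|^2 f^q=\frac{C}{q+1}(Y-X)>0$, yields $Y>X$.

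The reduction proper rests on two computations. Multiplying $\Delta f=C(f-f^{p-1})$ by $f^{s+1}$ and integrating by parts gives $\int_M|\nabla f|^2 f^{s}=\frac{C}{s+1}\big(\int_M f^{s+p}-\int_M f^{s+2}\big)$ for every $s$; squaring the equation gives $\int_M(\Delta f)^2 f^q=C^2(X-2Y+Z)$; and feeding the equation into $\int_M|\nabla f|^2(\Delta f)f^{q-1}=C\int_M|\nabla f|^2 f^q-C\int_M|\nabla f|^2 f^{q+p-2}$ and applying the first identity expresses this mixed term through $X,Y,Z$ as well. The one quantity in Theorem~\ref{improved CD} that is \emph{not} a moment is $\int_M|\nabla f|^4 f^{q-2}$. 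Since it enters with coefficient $-B_1$, I would restrict to $B_1\le 0$ and replace it by its Cauchy--Schwarz lower bound, $\int_M|\nabla f|^4 f^{q-2}\ge\big(\int_M|\nabla f|^2(\Delta f)f^{q-1}\big)^2\big/\int_M(\Delta f)^2 f^q$; because $-B_1\ge 0$ this preserves the inequality while turning the last term into a rational expression in $X,Y,Z$.

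After these substitutions the estimate is a homogeneous quadratic relation in $X,Y,Z$. The convexity bound $Y^2\le XZ$ (Cauchy--Schwarz, valid because $q+p$ is the average of $q+2$ and $q+2p-2$) gives both $X-2Y+Z>0$ and $Z-Y\ge Y-X$, so the relation depends only on the ratio $r=\frac{Z-Y}{Y-X}>1$. Writing $x=r-1>0$, $P=q+1$, and $\Lambda=(m+(m-1)k)(q+p-1)^2+A_1(q+p-1)+B_1$, clearing denominators shows that the sought bound $C\ge\frac{2m\rho}{(m+(m-1)k)(p-2)}$ will follow once the explicit quadratic
$g(x)=P^2\Lambda\,x^2-(p-2)P(\Lambda+B_1)\,x+B_1(p-2)^2$
is nonpositive for all $x>0$; here I still have the two free parameters $a$ and $q\ge 0$ at my disposal, with $a$ entering through $A_1$, $B_1$, and $\Lambda$.

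The main obstacle is precisely this sign analysis. Nonpositivity of $g$ on $(0,\infty)$ forces the leading and constant signs $\Lambda\le 0$ and $B_1\le 0$ together with a discriminant/vertex condition, and I would then optimize over $q\ge 0$ (with $a$ slaved to it through the requirement on $\Lambda$) to make this system solvable under the weakest hypothesis on $k$. I expect that optimization to collapse, via a perfect-square identity, exactly to the admissibility condition $p\le 1+\frac{m+1}{m-1}\frac{4k}{(k+1)^2}$; this is consistent with the endpoint $k=1$, where the condition reads $p\le\frac{2m}{m-1}$, i.e.\ the whole allowed range of $p$. Verifying that such a $q\ge 0$ exists and that the condition is sharp, rather than the cruder bound obtained by simply discarding the $\int_M|\nabla f|^4 f^{q-2}$ term, is the delicate computational heart of the proof; once $g\le 0$ is secured the lower bound for $C$ is immediate.
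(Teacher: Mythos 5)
Your preliminary reductions are all sound: the sign argument giving $C>0$, the moment identities, and the Cauchy--Schwarz lower bound for $\int_M|\nabla f|^4f^{q-2}$ (legitimate in that direction precisely because its coefficient $-B_1$ is assumed nonnegative). Your quadratic $g$ is also the right object, but its own structure already dooms the plan: $g$ factors as $g(x)=\alpha(x)\bigl[P_1x+B_1\alpha(x)\bigr]$ with $\alpha(x)=(q+1)x-(p-2)$ and $P_1=(q+1)(q+p-1)\bigl((m+(m-1)k)(q+p-1)+A_1\bigr)$, so $g$ vanishes at the interior point $x_0=\frac{p-2}{q+1}>0$, and nonpositivity on all of $(0,\infty)$ forces $x_0$ to be a double root, i.e. $A_1=-(m+(m-1)k)(q+p-1)$, which enslaves $a$ to $q$; then $g=B_1\alpha^2$ and everything hinges on finding $q\geq 0$ with $B_1\leq 0$. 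This is where your expected ``collapse to the admissibility condition'' fails. Take $m=2$, $k=1$, so the hypothesis allows $p\leq 4$. The slaving condition gives $a=\frac{q-(p-1)}{2}$, and substituting into $B_1=3a^2+(4-4q)a+2q(q-1)$ yields
\begin{equation*}
B_1(q)=\frac{3}{4}q^2+\frac{p-1}{2}\,q+\frac{3}{4}(p-1)^2-2(p-1),
\end{equation*}
which is increasing on $q\geq 0$, so feasibility requires $B_1(0)\leq 0$, i.e. $p\leq \frac{11}{3}$, strictly short of $p\leq 4$. At the endpoint $p=4$ one gets $B_1(q)=\frac{3}{4}(q+1)^2>0$ for every $q\geq 0$, vanishing only at the inadmissible value $q=-1$. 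The same loss occurs for general $(m,k)$: at $q=0$ the scheme needs $p-1\leq \frac{4m(m+1)k}{(m+(m-1)k)(m-1+mk)}$, which is strictly smaller than the hypothesis $p-1\leq\frac{4(m+1)k}{(m-1)(k+1)^2}$.

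The missing idea is that the paper does not feed $f$ itself into Theorem~\ref{improved CD} with the weight $q\geq 0$ as a free parameter; it substitutes $u=f^{1/r}$ with $r$ ranging over \emph{all} of $\mathbb{R}$ and applies the theorem at $q=0$. Unwinding the substitution, this amounts to an effective weight $q=\frac{2}{r}-2$, which can be negative --- and at the endpoint example above the paper's optimal choice $r=2$ corresponds exactly to the value $q=-1$ your constraint $q\geq 0$ excludes. Concretely, the paper derives the identity (\ref{a3}) from the PDE satisfied by $u$, uses $a$ to annihilate the $(\Delta u)^2$ coefficient, and then chooses $r$ to be a real root of the quadratic $\Theta(r)$ in (\ref{a14}), killing the quartic term exactly; the discriminant computation (\ref{a17}) shows $\Theta$ has a real root if and only if $p\leq 1+\frac{m+1}{m-1}\frac{4k}{(k+1)^2}$, which is how the sharp admissibility condition enters. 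Your Cauchy--Schwarz relaxation ($B_1\leq 0$ instead of exact cancellation) is a nice observation but, as the computation shows, it cannot compensate for the lost parameter range: as written, your argument proves the lemma only for a strictly smaller interval of $p$, so either you must rerun your scheme after the power substitution $u=f^{1/r}$, or extend Theorem~\ref{improved CD} to negative weights $q$, before the claimed bound follows on the full range.
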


\begin{proof}
We follow the approach in \cite{VV}, see also \cite{Ba, Le} and Ch.6.8 of 
\cite{BGL}. Define 
\begin{equation*}
u=f^{\frac{1}{r}}
\end{equation*}%
for some $r\in \mathbb{R}$ to be determined later.

We claim the following identity 
\begin{eqnarray}
C\left( p-2\right) \int_{M}\left\vert \nabla u\right\vert ^{2}
&=&\int_{M}\left( \Delta u\right) ^{2}+r\left( p-1\right) \int_{M}\frac{%
\left\vert \nabla u\right\vert ^{2}}{u}\Delta u  \label{a3} \\
&&+\left( r-1\right) \left( r\left( p-2\right) +1\right) \int_{M}\frac{%
\left\vert \nabla u\right\vert ^{4}}{u^{2}}.  \notag
\end{eqnarray}%
Indeed, since $f=u^{r}$ and $\Delta f=C\left( f-f^{p-1}\right) ,$ we get the
following equation for $u$ 
\begin{equation}
ru^{r-1}\Delta u+r\left( r-1\right) u^{r-2}\left\vert \nabla u\right\vert
^{2}=Cu^{r}-Cu^{r\left( p-1\right) }.  \label{a4}
\end{equation}%
Multiply (\ref{a4}) by $\left\vert \nabla u\right\vert ^{2}u^{-r}$ to get 
\begin{equation}
r\int_{M}\frac{\left\vert \nabla u\right\vert ^{2}}{u}\Delta u+r\left(
r-1\right) \int_{M}\frac{\left\vert \nabla u\right\vert ^{4}}{u^{2}}%
=C\int_{M}\left\vert \nabla u\right\vert ^{2}-C\int_{M}u^{r\left( p-2\right)
}\left\vert \nabla u\right\vert ^{2}.  \label{a5}
\end{equation}%
We can write using integration by parts 
\begin{eqnarray*}
\int_{M}u^{r\left( p-2\right) }\left\vert \nabla u\right\vert ^{2} &=&\frac{1%
}{r\left( p-2\right) +1}\int_{M}\left\langle \nabla u,\nabla u^{r\left(
p-2\right) +1}\right\rangle \\
&=&-\frac{1}{r\left( p-2\right) +1}\int_{M}u^{r\left( p-2\right) +1}\Delta u.
\end{eqnarray*}%
Hence, (\ref{a5}) becomes 
\begin{eqnarray}
&&r\int_{M}\frac{\left\vert \nabla u\right\vert ^{2}}{u}\Delta u+r\left(
r-1\right) \int_{M}\frac{\left\vert \nabla u\right\vert ^{4}}{u^{2}}
\label{a6} \\
&=&C\int_{M}\left\vert \nabla u\right\vert ^{2}+\frac{C}{r\left( p-2\right)
+1}\int_{M}u^{r\left( p-2\right) +1}\Delta u.  \notag
\end{eqnarray}%
To compute the second term on the right side of (\ref{a6}) we multiply (\ref%
{a4}) by $u^{-r+1}\Delta u$ and get 
\begin{equation*}
r\int_{M}\left( \Delta u\right) ^{2}+r\left( r-1\right) \int_{M}\frac{%
\left\vert \nabla u\right\vert ^{2}}{u}\Delta u=-C\int_{M}\left\vert \nabla
u\right\vert ^{2}-C\int_{M}u^{r\left( p-2\right) +1}\Delta u.
\end{equation*}%
This proves that 
\begin{eqnarray*}
\frac{C}{r\left( p-2\right) +1}\int_{M}u^{r\left( p-2\right) +1}\Delta u &=&-%
\frac{r}{r\left( p-2\right) +1}\int_{M}\left( \Delta u\right) ^{2}-\frac{%
r\left( r-1\right) }{r\left( p-2\right) +1}\int_{M}\frac{\left\vert \nabla
u\right\vert ^{2}}{u}\Delta u \\
&&-\frac{C}{r\left( p-2\right) +1}\int_{M}\left\vert \nabla u\right\vert
^{2}.
\end{eqnarray*}%
Plugging this in (\ref{a6}) implies that 
\begin{eqnarray*}
&&r\left( p-1\right) \int_{M}\frac{\left\vert \nabla u\right\vert ^{2}}{u}%
\Delta u+\left( r-1\right) \left( r\left( p-2\right) +1\right) \int_{M}\frac{%
\left\vert \nabla u\right\vert ^{4}}{u^{2}} \\
&=&C\left( p-2\right) \int_{M}\left\vert \nabla u\right\vert
^{2}-\int_{M}\left( \Delta u\right) ^{2},
\end{eqnarray*}%
which is exactly (\ref{a3}).\ 

Applying Theorem \ref{Bochner} for $q=0$ we get

\begin{eqnarray}
\rho \int_{M}\left\vert \nabla u\right\vert ^{2} &\leq &\frac{1}{2m}\left(
\left( m-1\right) k+m\right) \int_{M}\left( \Delta u\right) ^{2}  \label{a11}
\\
&&-\frac{a\left( m+1\right) k}{m}\int_{M}\frac{\left\vert \nabla
u\right\vert ^{2}}{u}\Delta u  \notag \\
&&+\left( ak+\frac{a^{2}k}{2m}\left( mk+\left( m-1\right) \right) \right)
\int_{M}\frac{\left\vert \nabla u\right\vert ^{4}}{u^{2}}.  \notag
\end{eqnarray}%
Note that (\ref{a3}) yields 
\begin{eqnarray*}
-\frac{a\left( m+1\right) k}{m}\int_{M}\frac{\left\vert \nabla u\right\vert
^{2}}{u}\Delta u &=&-\frac{C\left( p-2\right) }{r\left( p-1\right) }\frac{%
a\left( m+1\right) k}{m}\int_{M}\left\vert \nabla u\right\vert ^{2} \\
&&+\frac{1}{r\left( p-1\right) }\frac{a\left( m+1\right) k}{m}\int_{M}\left(
\Delta u\right) ^{2} \\
&&+\frac{\left( r-1\right) \left( r\left( p-2\right) +1\right) }{r\left(
p-1\right) }\frac{a\left( m+1\right) k}{m}\int_{M}\frac{\left\vert \nabla
u\right\vert ^{4}}{u^{2}}.
\end{eqnarray*}%
Using this in (\ref{a11}) implies%
\begin{eqnarray}
&&\left( \rho +\frac{C\left( p-2\right) }{r\left( p-1\right) }\frac{a\left(
m+1\right) k}{m}\right) \int_{M}\left\vert \nabla u\right\vert ^{2}
\label{a12} \\
&\leq &\left( \frac{1}{2m}\left( \left( m-1\right) k+m\right) +\frac{1}{%
r\left( p-1\right) }\frac{a\left( m+1\right) k}{m}\right) \int_{M}\left(
\Delta u\right) ^{2}  \notag \\
&&+\mathcal{F}\int_{M}\frac{\left\vert \nabla u\right\vert ^{4}}{u^{2}}, 
\notag
\end{eqnarray}%
where%
\begin{equation}
\mathcal{F=}ak+\frac{a^{2}k}{2m}\left( mk+\left( m-1\right) \right) +\frac{%
\left( r-1\right) \left( r\left( p-2\right) +1\right) }{r\left( p-1\right) }%
\frac{a\left( m+1\right) k}{m}.  \label{a12.1}
\end{equation}%
We now set 
\begin{equation}
a=-\frac{\left( p-1\right) r\left( m+\left( m-1\right) k\right) }{2\left(
m+1\right) k},  \label{a13}
\end{equation}%
so that 
\begin{equation*}
\frac{1}{2m}\left( \left( m-1\right) k+m\right) +\frac{1}{r\left( p-1\right) 
}\frac{a\left( m+1\right) k}{m}=0.
\end{equation*}%
Hence, (\ref{a12}) becomes 
\begin{equation}
\left( \rho -\frac{\left( p-2\right) \left( m+\left( m-1\right) k\right) }{2m%
}C\right) \int_{M}\left\vert \nabla u\right\vert ^{2}\leq \mathcal{F}\int_{M}%
\frac{\left\vert \nabla u\right\vert ^{4}}{u^{2}}.  \label{a13.1}
\end{equation}%
By plugging (\ref{a13}) in (\ref{a12.1}) we get%
\begin{equation*}
\mathcal{F}=\frac{m+\left( m-1\right) k}{8m\left( m+1\right) ^{2}k}\Theta
\left( r\right) ,
\end{equation*}%
where 
\begin{eqnarray}
\Theta \left( r\right) &=&-4km\left( m+1\right) \left( p-1\right) r
\label{a14} \\
&&+\left( p-1\right) ^{2}\left( m+\left( m-1\right) k\right) \left(
mk+\left( m-1\right) \right) r^{2}  \notag \\
&&-4k\left( m+1\right) ^{2}\left( r-1\right) \left( r\left( p-2\right)
+1\right)  \notag
\end{eqnarray}%
In conclusion, (\ref{a13.1}) implies that 
\begin{eqnarray}
&&-\frac{m+\left( m-1\right) k}{8m\left( m+1\right) ^{2}k}\Theta \left(
r\right) \int_{M}\frac{\left\vert \nabla u\right\vert ^{4}}{u^{2}}
\label{a14.1} \\
&\leq &\left( \frac{\left( m+\left( m-1\right) k\right) \left( p-2\right) }{%
2m}C-\rho \right) \int_{M}\left\vert \nabla u\right\vert ^{2}  \notag
\end{eqnarray}%
where $\Theta \left( r\right) $ is given in (\ref{a14}).

We write 
\begin{equation}
\Theta \left( r\right) =c_{0}r^{2}+c_{1}r+c_{2},  \label{a15}
\end{equation}%
\newline
where 
\begin{eqnarray*}
c_{0} &=&\left( p-1\right) ^{2}\left( m+\left( m-1\right) k\right) \left(
mk+\left( m-1\right) \right) -4k\left( m+1\right) ^{2}\left( p-2\right)  \\
c_{1} &=&-4k\left( m+1\right) \left( 2m+3-p\right)  \\
c_{2} &=&4k\left( m+1\right) ^{2}
\end{eqnarray*}%
We claim there exists $r\in \mathbb{R}$ so that $\Theta \left( r\right) =0$.
Indeed, observe that 
\begin{equation}
\left( c_{1}\right) ^{2}-4c_{0}c_{2}=16k\left( m+1\right) ^{2}Q  \label{a17}
\end{equation}%
where 
\begin{eqnarray*}
Q &=&k\left( 2m+3-p\right) ^{2}-\left( p-1\right) ^{2}\left( m+\left(
m-1\right) k\right) \left( mk+\left( m-1\right) \right)  \\
&&+4k\left( m+1\right) ^{2}\left( p-2\right)  \\
&=&m\left( m-1\right) \left( k+1\right) ^{2}\left( p-1\right) \left( 1+\frac{%
m+1}{m-1}\frac{4k}{\left( k+1\right) ^{2}}-p\right)  \\
&\geq &0.
\end{eqnarray*}%
In the last line we used the hypothesis that 
\begin{equation*}
p\leq 1+\frac{m+1}{m-1}\frac{4k}{\left( k+1\right) ^{2}}.
\end{equation*}%
This proves that indeed there exists $r\in \mathbb{R}$ so that $\Theta
\left( r\right) =0$. From (\ref{a14.1}) we get that 
\begin{equation*}
C\geq \frac{2m\rho }{\left( m+\left( m-1\right) k\right) \left( p-2\right) },
\end{equation*}%
which proves the lemma.
\end{proof}

From \cite{VV} and Lemma \ref{B} we obtain the following Sobolev inequality.

\begin{proposition}
\label{C}Let $\left( M^{m},g\right) $ be a compact K\"{a}hler manifold
satisfying $\mathrm{Ric}\geq \rho $ and $\mathrm{Vol}\left( M\right) =1.$
For any $2<p\leq \frac{2m}{m-1}$ we have the Sobolev inequality 
\begin{equation}
\left( \int_{M}\left\vert \phi \right\vert ^{p}\right) ^{\frac{2}{p}%
}-\int_{M}\phi ^{2}\leq \frac{\left( m+\left( m-1\right) k\right) \left(
p-2\right) }{2m\rho }\int_{M}\left\vert \nabla \phi \right\vert ^{2},
\label{a18}
\end{equation}%
for any $k>0$ such that 
\begin{equation}
p\leq 1+\frac{m+1}{m-1}\frac{4k}{\left( k+1\right) ^{2}}.  \label{a18'}
\end{equation}
\end{proposition}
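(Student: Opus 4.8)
The plan is to identify the optimal constant in \eqref{a18} with a variational quantity whose extremal function solves exactly the equation handled by Lemma \ref{B}. Set
\[
A^{\ast }=\sup_{\phi }\frac{\left( \int_{M}\left\vert \phi \right\vert ^{p}\right) ^{2/p}-\int_{M}\phi ^{2}}{\int_{M}\left\vert \nabla \phi \right\vert ^{2}},
\]
the supremum being over nonconstant $\phi \in C^{\infty }(M)$. Since $\mathrm{Vol}(M)=1$ and $p>2$, Jensen's inequality gives $\left( \int_{M}\left\vert \phi \right\vert ^{p}\right) ^{2/p}\geq \int_{M}\phi ^{2}$, with equality only for constants, so the numerator is strictly positive for nonconstant $\phi $ and hence $A^{\ast }>0$; the Riemannian inequality \eqref{a1} (with $n=2m$) shows $A^{\ast }<\infty $. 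Establishing \eqref{a18} is then equivalent to proving $A^{\ast }\leq \frac{\left( m+(m-1)k\right) (p-2)}{2m\rho }$.

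In the strictly subcritical range $2<p<\frac{2m}{m-1}$ I would first show that $A^{\ast }$ is attained. Starting from a maximizing sequence and replacing each term by its absolute value (which leaves both integrals in the numerator unchanged while not increasing the Dirichlet energy), the compactness of the embedding $H^{1}(M)\hookrightarrow L^{p}(M)$ produces a nonnegative, nonconstant maximizer $f\in H^{1}(M)$, following \cite{VV}. I expect this step---the use of subcriticality to gain compactness---to be the main obstacle, and it is the only place where the restriction $p<\frac{2m}{m-1}$ genuinely enters.

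Next I would extract the Euler--Lagrange equation. As $f$ maximizes the scale-invariant ratio, its first variation vanishes, giving $A^{\ast }\Delta f=f-\left( \int_{M}f^{p}\right) ^{(2-p)/p}f^{p-1}$; a constant rescaling of $f$ absorbs the factor and yields a nonconstant solution of
\[
\Delta f=C\left( f-f^{p-1}\right) ,\qquad C=\frac{1}{A^{\ast }}.
\]
Elliptic regularity makes $f$ smooth, and since $\Delta f-Cf=-Cf^{p-1}\leq 0$ with $f\geq 0$, the strong maximum principle forces $f>0$. Thus $f$ satisfies all hypotheses of Lemma \ref{B}, which under the constraint \eqref{a18'} gives $C\geq \frac{2m\rho }{\left( m+(m-1)k\right) (p-2)}$, that is $A^{\ast }\leq \frac{\left( m+(m-1)k\right) (p-2)}{2m\rho }$. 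This is precisely \eqref{a18} for subcritical $p$.

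Finally I would recover the critical exponent $p=\frac{2m}{m-1}$ by a limiting argument, where compactness fails. Fix $k$ so that \eqref{a18'} holds at $p=\frac{2m}{m-1}$; since the right-hand side of \eqref{a18'} does not depend on $p$, the same $k$ is admissible for every $2<p<\frac{2m}{m-1}$, so \eqref{a18} holds there by the preceding steps. For fixed $\phi \in C^{\infty }(M)$ both sides of \eqref{a18}, as well as the constant $\frac{\left( m+(m-1)k\right) (p-2)}{2m\rho }$, depend continuously on $p$ because $M$ has finite volume; letting $p\nearrow \frac{2m}{m-1}$ then yields \eqref{a18} in the critical case and completes the proof.
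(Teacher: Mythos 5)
Your proposal is correct and takes essentially the same route as the paper: both identify the sharp constant with the supremum of the functional $\mathcal{F}(\phi)=\left(\left(\int_M|\phi|^p\right)^{2/p}-\int_M\phi^2\right)/\int_M|\nabla\phi|^2$ (finite by \eqref{a1}), pass to a positive, nonconstant, smooth extremizer normalized to solve $\Delta f=C\left(f-f^{p-1}\right)$ with $C=1/\Lambda$, and conclude via Lemma \ref{B}. The only difference is one of detail: where you carry out the subcritical compactness argument and the limiting step $p\nearrow\frac{2m}{m-1}$ explicitly, the paper delegates precisely these points to the argument of Ch.~6.8.2 of \cite{BGL}.
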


\begin{proof}
Consider the functional 
\begin{equation*}
\mathcal{F}\left( \phi \right) =\frac{\left( \int_{M}\left\vert \phi
\right\vert ^{p}\right) ^{\frac{2}{p}}-\int_{M}\phi ^{2}}{\int_{M}\left\vert
\nabla \phi \right\vert ^{2}}.
\end{equation*}%
According to (\ref{a1}), 
\begin{equation*}
\sup_{\phi \in C^{\infty }\left( M\right) }\mathcal{F}\left( \phi \right)
\leq \frac{\left( n-1\right) \left( p-2\right) }{n\rho }.
\end{equation*}%
To improve (\ref{a1}), let $f$ be an extremum of the functional $\mathcal{F}$%
. By the argument in Ch. 6.8.2 of \ \cite{BGL}, we may assume that $f$ is
positive, nonconstant and smooth. Denoting with 
\begin{equation*}
\Lambda :=\sup_{\phi \in C^{\infty }\left( M\right) }\mathcal{F}\left( \phi
\right) ,
\end{equation*}%
we may normalize $f$ so that it satisfies the partial differential equation 
\begin{equation}
\Delta f=\frac{1}{\Lambda }\left( f-f^{p-1}\right) .  \label{a2}
\end{equation}%
The result now follows from Lemma \ref{B}.
\end{proof}

Let us note that solving the equation 
\begin{equation*}
p=1+\frac{m+1}{m-1}\frac{4k}{\left( k+1\right) ^{2}}
\end{equation*}%
in $k$ and plugging it in (\ref{a18}) implies that 
\begin{equation*}
\left( \int_{M}\left\vert \phi \right\vert ^{p}\right) ^{\frac{2}{p}%
}-\int_{M}\phi ^{2}\leq C_{S}\int_{M}\left\vert \nabla \phi \right\vert ^{2},
\end{equation*}%
for any $\phi \in C^{\infty }\left( M\right) $, where 
\begin{equation*}
C_{S}=\frac{p-2}{\left( p-1\right) 2m\rho }\left( 2m+p+1-2\sqrt{\left(
m+1\right) \left( 2m-\left( m-1\right) p\right) }\right) .
\end{equation*}%
This proves the first part of Theorem \ref{SK}.

As another consequence of Lemma \ref{B}, we obtain the following Beckner
type inequality, that completes the proof of Theorem \ref{SK}.

\begin{proposition}
\label{D}Let $\left( M^{m},g\right) $ be a K\"{a}hler manifold of complex
dimension $m\geq 2$ and with Ricci curvature $\mathrm{Ric}\geq \rho $ and $%
\mathrm{Vol}\left( M\right) =1.$ For any $1<p\leq 2$ we have the inequality 
\begin{equation*}
\int_{M}\phi ^{2}-\left( \int_{M}\phi ^{\frac{2}{p}}\right) ^{p}\leq
C_{B}\int_{M}\left\vert \nabla \phi \right\vert ^{2},
\end{equation*}%
where 
\begin{equation*}
C_{B}=\frac{p-1}{p}\frac{2m}{\left( \left( m-1\right) p+2\right) \rho }.
\end{equation*}
\end{proposition}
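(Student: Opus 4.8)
The plan is to follow the variational scheme of Proposition~\ref{C}, but for the Beckner functional, and to reuse the \emph{computation} inside Lemma~\ref{B} rather than its stated conclusion. First I would set
\[
\mathcal{G}(\phi)=\frac{\int_M\phi^2-\left(\int_M\phi^{2/p}\right)^p}{\int_M\left|\nabla\phi\right|^2},\qquad \Lambda:=\sup_{\phi}\mathcal{G}(\phi),
\]
so that the assertion is precisely $\Lambda\le C_B$. Exactly as in Ch.~6.8.2 of \cite{BGL} (the reference already used for Proposition~\ref{C}), I would take an extremizer $f$ that is positive, nonconstant and smooth; normalizing $\int_M f^{2/p}=1$, its Euler--Lagrange equation is
\[
\Delta f=\frac{1}{\Lambda}\left(f^{\frac{2}{p}-1}-f\right)=C\left(f-f^{\frac{2}{p}-1}\right),\qquad C=-\frac{1}{\Lambda}<0.
\]
This is the equation treated in Lemma~\ref{B}, but with exponent $\tilde p:=\frac{2}{p}\in[1,2)$ and a \emph{negative} constant $C$.

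Next I would rerun the proof of Lemma~\ref{B} with $\tilde p$ in place of $p$. Although that lemma is stated for $2<p\le\frac{2m}{m-1}$, every step of its proof --- the identity \eqref{a3}, the use of Theorem~\ref{Bochner} with $q=0$, and the analysis of the quadratic $\Theta(r)$ --- is purely algebraic and remains valid for any exponent $>1$. Since $1<p\le2$ gives $\tilde p-1\ge0$, the discriminant identity still yields $Q\ge0$, hence a real root $r$ of $\Theta$, precisely when
\[
\frac{2}{p}\le 1+\frac{m+1}{m-1}\frac{4k}{\left(k+1\right)^2};
\]
this holds on a whole interval of $k>0$ because the right-hand side attains values up to $\frac{2m}{m-1}\ge\frac{2}{p}$. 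For such $k$ the pivotal inequality \eqref{a14.1} collapses, as in Lemma~\ref{B}, to
\[
\frac{\left(m+\left(m-1\right)k\right)\left(\tilde p-2\right)}{2m}\,C\ge\rho.
\]

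The step I expect to be the main obstacle is the sign bookkeeping in this last line. Here $\tilde p-2<0$ and $C<0$, so their product is positive and the inequality is consistent; but solving it for $C_B=\Lambda=-1/C$ now requires dividing by the \emph{negative} factor $\frac{(m+(m-1)k)(\tilde p-2)}{2m}$, which reverses the inequality and turns the Sobolev-type lower bound of Lemma~\ref{B} into the \emph{upper} bound
\[
C_B\le\frac{\left(m+\left(m-1\right)k\right)\left(p-1\right)}{mp\,\rho},
\]
valid for every admissible $k$. Getting this direction right is exactly where care is needed. Finally, instead of optimizing in $k$, I would make the clean rational choice
\[
k=\frac{m\left(2-p\right)}{\left(m-1\right)p+2},
\]
verify that it satisfies the constraint above (which reduces to the polynomial inequality $(m-1)(2m+2-p)^2\le 4mp(m+1)((m-1)p+2)$, readily checked for $1<p\le2$, $m\ge2$), and compute $m+(m-1)k=\frac{2m^2}{(m-1)p+2}$. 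Substituting produces
\[
C_B=\frac{p-1}{p}\,\frac{2m}{\left(\left(m-1\right)p+2\right)\rho},
\]
as claimed; since this $k$ does not minimize the bound, the constant is admissible but need not be optimal for $p\neq2$, consistent with the discussion following Theorem~\ref{SK}.
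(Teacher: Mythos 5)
Your algebra downstream is correct, and it is a genuinely different route from the paper's, but it has a genuine gap at the very first step: the assertion that, ``exactly as in Ch.~6.8.2 of \cite{BGL},'' the Beckner quotient $\mathcal{G}$ admits a positive, smooth, \emph{nonconstant} extremizer. That reference, and the corresponding step in the paper's Proposition \ref{C}, concerns subcritical Sobolev exponents $p>2$, where attainment follows from the compact embedding $H^{1}\hookrightarrow L^{p}$. For the exponent $\tilde{p}=2/p\in[1,2)$ the analogous statement is in general \emph{false}: on the round sphere, where the Beckner constant $\frac{2-\tilde{p}}{\lambda_{1}}$ is optimal, the supremum of $\mathcal{G}$ is approached only along near-constant functions $1+\epsilon\psi_{1}$ with $\epsilon\to0$, and no nonconstant extremizer exists. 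So the nonconstant solution $f$ of $\Delta f=C\left(f-f^{\tilde{p}-1}\right)$ that you feed into the machinery of Lemma \ref{B} need not exist, and without it the argument produces nothing. This is precisely why the paper abandons the variational scheme for $1<p\leq2$ and instead follows \cite{GZ}: it runs the heat semigroup from $\phi^{2/p}$, sets $q=\frac{2-p}{p-1}$, $u=f^{p-1}$, applies the \emph{Hessian} inequality of Theorem \ref{improved CD} with this $q>0$ and with $a$ chosen so that $A_{2}=0$, and integrates the differential inequality $\Lambda^{\prime\prime}\left(t\right)\geq\frac{\left(m-1\right)p+2}{m}\,2\rho\left(-\Lambda^{\prime}\left(t\right)\right)$ twice; no extremal function, and hence no existence or regularity theory, is ever needed.

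That said, everything after your first step checks out: the sign bookkeeping is right (with $\Theta(r)=0$, \eqref{a14.1} gives $\frac{\left(m+\left(m-1\right)k\right)\left(\tilde{p}-2\right)}{2m}C\geq\rho$ and hence $\Lambda\leq\frac{\left(p-1\right)\left(m+\left(m-1\right)k\right)}{mp\,\rho}$), your constraint for $k=\frac{m\left(2-p\right)}{\left(m-1\right)p+2}$ does reduce to $\left(m-1\right)\left(2m+2-p\right)^{2}\leq4mp\left(m+1\right)\left(\left(m-1\right)p+2\right)$, which holds on $1\leq p\leq2$ since the left side decreases and the right side increases in $p$ and the case $p=1$ is easily verified, and $m+\left(m-1\right)k=\frac{2m^{2}}{\left(m-1\right)p+2}$ indeed yields $C_{B}$. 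The proposal is therefore repairable, but only with real additional work: prove the dichotomy that either a nonconstant maximizer exists (standard compactness when the constant components of a maximizing sequence stay bounded, plus a maximum-principle argument for positivity with the non-Lipschitz nonlinearity $f^{\tilde{p}-1}$), or $\Lambda$ equals the linearized value $\frac{2-\tilde{p}}{\lambda_{1}}$; in the second case invoke the \emph{classical} K\"ahler Lichnerowicz bound $\lambda_{1}\geq2\rho$ --- not the $p=2$ case of Proposition \ref{D} itself, which would be circular --- and observe $\frac{2-\tilde{p}}{2\rho}=\frac{p-1}{p\rho}\leq C_{B}$ since $p\leq2$. Note also that at $p=2$ your $k$ vanishes while Lemma \ref{B}'s proof divides by $k$ and by $r\left(\tilde{p}-1\right)$, so the endpoint needs a separate limiting argument, whereas the heat-flow proof covers it uniformly. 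Finally, a tell-tale sign of the gap: if the extremizer step were free, minimizing $m+\left(m-1\right)k$ over all admissible $k$ (rather than your rational choice) would give a constant strictly smaller than $C_{B}$ for $1<p<2$ --- an improvement the authors, who certainly had Lemma \ref{B} in hand, conspicuously do not claim.
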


\begin{proof}
We follow the proof in \cite{GZ}. For a fixed $\phi $, let $f\left( t\right) 
$ be the solution of 
\begin{eqnarray*}
f_{t} &=&\Delta f \\
f\left( 0\right) &=&\phi ^{\frac{2}{p}}.
\end{eqnarray*}%
Define 
\begin{equation}
\Lambda \left( t\right) =\int_{M}f^{p}-\left( \int_{M}f\right) ^{p}.
\label{e1}
\end{equation}%
Our goal is to establish a differential inequality for $\Lambda \left(
t\right) $, for all $t>0$. For this, it is convenient to denote 
\begin{eqnarray}
q &=&\frac{2-p}{p-1}  \label{e2} \\
u &=&f^{p-1}.  \notag
\end{eqnarray}

Clearly, $q\geq 0$ and $u>0$. We have the following identities (cf. \cite{GZ}%
)%
\begin{equation}
\frac{p-1}{p}\Lambda ^{\prime }\left( t\right) =-\int_{M}\left\vert \nabla
u\right\vert ^{2}u^{q}  \label{e3}
\end{equation}%
and 
\begin{eqnarray}
\frac{p-1}{p}\Lambda ^{\prime \prime }\left( t\right) &\geq
&2\int_{M}\left\vert \nabla ^{2}u\right\vert ^{2}u^{q}+2\rho
\int_{M}\left\vert \nabla u\right\vert ^{2}u^{q}  \label{e4} \\
&&+q\int_{M}\left\vert \nabla u\right\vert ^{4}u^{q-2}.  \notag
\end{eqnarray}%
Indeed, we have 
\begin{eqnarray*}
\Lambda ^{\prime }\left( t\right) &=&p\int_{M}\left( f_{t}\right)
f^{p-1}-p\left( \int_{M}f\right) ^{p-1}\int_{M}f_{t} \\
&=&p\int_{M}\left( \Delta f\right) f^{p-1} \\
&=&-p\left( p-1\right) \int_{M}\left\vert \nabla f\right\vert ^{2}f^{p-2}.
\end{eqnarray*}%
Using that $f=u^{\frac{1}{p-1}}$, this immediately implies (\ref{e3}). Note
that $u$ verifies the equation 
\begin{equation*}
u_{t}=\Delta u+q\left\vert \nabla u\right\vert ^{2}u^{-1}.
\end{equation*}%
Therefore, 
\begin{equation}
\frac{d}{dt}\left\vert \nabla u\right\vert ^{2}=2\left\langle \nabla
u,\nabla \Delta u\right\rangle +2q\left\langle \nabla u,\nabla \left\vert
\nabla u\right\vert ^{2}\right\rangle u^{-1}-2q\left\vert \nabla
u\right\vert ^{4}u^{-2}.  \label{e5}
\end{equation}%
The Bochner formula yields 
\begin{equation*}
\Delta \left\vert \nabla u\right\vert ^{2}\geq 2\left\vert \nabla
^{2}u\right\vert ^{2}+2\rho \left\vert \nabla u\right\vert
^{2}+2\left\langle \nabla \Delta u,\nabla u\right\rangle .
\end{equation*}%
Combining with (\ref{e5}) \ we have 
\begin{eqnarray*}
\frac{d}{dt}\left\vert \nabla u\right\vert ^{2} &\leq &\Delta \left\vert
\nabla u\right\vert ^{2}-2\left\vert \nabla ^{2}u\right\vert ^{2}-2\rho
\left\vert \nabla u\right\vert ^{2} \\
&&+2q\left\langle \nabla u,\nabla \left\vert \nabla u\right\vert
^{2}\right\rangle u^{-1}-2q\left\vert \nabla u\right\vert ^{4}u^{-2}.
\end{eqnarray*}%
Now take a derivative in $t$ of (\ref{e3}) and integrate by parts to get 
\begin{eqnarray*}
\frac{p-1}{p}\Lambda ^{\prime \prime }\left( t\right) &=&-\int_{M}\frac{d}{dt%
}\left( \left\vert \nabla u\right\vert ^{2}\right) u^{q}-q\int_{M}\left\vert
\nabla u\right\vert ^{2}\left( \Delta u+q\left\vert \nabla u\right\vert
^{2}u^{-1}\right) u^{q-1} \\
&\geq &2\int_{M}\left\vert \nabla ^{2}u\right\vert ^{2}u^{q}+2\rho
\int_{M}\left\vert \nabla u\right\vert ^{2}u^{q}+q\int_{M}\left\vert \nabla
u\right\vert ^{4}u^{q-2},
\end{eqnarray*}%
which proves (\ref{e4}).

According to Lemma \ref{B} we have for any $0\leq k\leq \frac{m}{m-1}$, 
\begin{eqnarray*}
\left( m+\left( m-1\right) k\right) \int_{M}\left\vert \nabla
^{2}u\right\vert ^{2}u^{q} &\geq &\left( m-\left( m-1\right) k\right) \rho
\int_{M}\left\vert \nabla u\right\vert ^{2}u^{q}+A_{2}\int_{M}\left\vert
\nabla u\right\vert ^{2}\left( \Delta u\right) u^{q-1} \\
&&-B_{2}\int_{M}\left\vert \nabla u\right\vert ^{4}u^{q-2},
\end{eqnarray*}%
where 
\begin{eqnarray}
A_{2} &=&2\left( m+1\right) ak+\frac{1}{2}\left( 1-k\right) mq-\frac{3}{2}kq
\label{e6} \\
B_{2} &=&\left( \left( m-1\right) +mk\right) ka^{2}+\left( 2-\left(
k+1\right) q\right) mka  \notag \\
&&+\frac{m}{4}\left( 1-k\right) ^{2}q^{2}+\frac{q\left( q-1\right) }{2}%
\left( m\left( k-1\right) +k\right) .  \notag
\end{eqnarray}%
Let 
\begin{equation}
a=\frac{3q}{4\left( m+1\right) }-\frac{m}{4\left( m+1\right) }\frac{1-k}{k}q,
\label{e7}
\end{equation}%
for which $A_{2}=0$. This yields 
\begin{eqnarray}
\int_{M}\left\vert \nabla ^{2}u\right\vert ^{2}u^{q} &\geq &\frac{m-\left(
m-1\right) k}{m+\left( m-1\right) k}\rho \int_{M}\left\vert \nabla
u\right\vert ^{2}u^{q}  \label{e8} \\
&&-\frac{B_{2}}{m+\left( m-1\right) k}\int_{M}\left\vert \nabla u\right\vert
^{4}u^{q-2},  \notag
\end{eqnarray}%
for $B_{2}$ specified in (\ref{e6}). It is more convenient to denote%
\begin{equation}
2\sigma =\frac{1-k}{k}q,  \label{e9}
\end{equation}%
where we are assuming $\sigma >0$, so in particular $0\leq k\leq \frac{m}{m-1%
}$. Then (\ref{e7}) becomes 
\begin{equation}
a=\frac{3q}{4\left( m+1\right) }-\frac{m}{2\left( m+1\right) }\sigma
\label{e10}
\end{equation}%
and (\ref{e8}) yields 
\begin{eqnarray}
\int_{M}\left\vert \nabla ^{2}u\right\vert ^{2}u^{q} &\geq &\frac{q+2m\sigma 
}{\left( 2m-1\right) q+2m\sigma }\rho \int_{M}\left\vert \nabla u\right\vert
^{2}u^{q}  \label{e11} \\
&&-\frac{1}{\left( 1+\frac{2}{q}\sigma \right) \left( 2m-1+\frac{2m}{q}%
\sigma \right) }B_{3}\int_{M}\left\vert \nabla u\right\vert ^{4}u^{q-2} 
\notag
\end{eqnarray}%
where 
\begin{eqnarray*}
B_{3} &=&\left( \left( 2m-1\right) +\frac{2\left( m-1\right) }{q}\sigma
\right) a^{2}+2\left( \left( 1-q\right) +\left( \frac{2}{q}-1\right) \sigma
\right) ma \\
&&+m\sigma ^{2}+\frac{q-1}{2}\left( q-2\left( m-1\right) \sigma -\frac{4}{q}%
m\sigma ^{2}\right) .
\end{eqnarray*}

Using (\ref{e10}) we obtain 
\begin{eqnarray*}
B_{3} &=&\frac{\left( 2m-1\right) q}{16\left( m+1\right) ^{2}}\left( 8\left(
m+1\right) -\left( 8m-1\right) q\right) \\
&&+\frac{3m-1}{8\left( m+1\right) ^{2}}\left( 8\left( m+1\right) -\left(
8m-1\right) q\right) \sigma \\
&&+\frac{m}{4\left( m+1\right) ^{2}}\left( \left( 2m^{2}-11m+2\right) +\frac{%
8\left( m+1\right) }{q}\right) \sigma ^{2} \\
&&+\frac{m^{2}\left( m-1\right) }{2\left( m+1\right) ^{2}q}\sigma ^{3}.
\end{eqnarray*}%
Plugging (\ref{e11}) into (\ref{e4}) yields 
\begin{eqnarray}
\frac{p-1}{p}\Lambda ^{\prime \prime }\left( t\right) &\geq &\left( \frac{%
q+2m\sigma }{\left( 2m-1\right) q+2m\sigma }+1\right) 2\rho
\int_{M}\left\vert \nabla u\right\vert ^{2}u^{q}  \label{e12} \\
&&+\Upsilon \int_{M}\left\vert \nabla u\right\vert ^{4}u^{q-2},  \notag
\end{eqnarray}%
where 
\begin{equation*}
\Upsilon =q-\frac{2B_{3}}{\left( 1+\frac{2}{q}\sigma \right) \left( 2m-1+%
\frac{2m}{q}\sigma \right) }.
\end{equation*}%
It can be checked directly that $\Upsilon \geq 0$ is equivalent to $\mathcal{%
E}\geq 0,$ where 
\begin{eqnarray*}
\mathcal{E} &=&\left( 2m-1\right) q\left( 8m\left( m+1\right) +\left(
8m-1\right) q\right) \\
&&+2\left( 3m-1\right) \left( 8m\left( m+1\right) +\left( 8m-1\right)
q\right) \sigma \\
&&+4m\left( 8m\left( m+1\right) \frac{1}{q}-\left( 2m^{2}-11m+2\right)
\right) \sigma ^{2} \\
&&-8m^{2}\left( m-1\right) \frac{1}{q}\sigma ^{3}.
\end{eqnarray*}%
It is easy to see that $\mathcal{E}\geq 0$ for $\sigma =1+\frac{q}{2m}$,
which by (\ref{e12}) proves%
\begin{equation*}
\frac{p-1}{p}\Lambda ^{\prime \prime }\left( t\right) \geq \frac{\left(
m+1\right) q+2m}{m\left( q+1\right) }2\rho \int_{M}\left\vert \nabla
u\right\vert ^{2}u^{q}.
\end{equation*}%
Together with (\ref{e3}) this yields 
\begin{eqnarray*}
\Lambda ^{\prime \prime }\left( t\right) &\geq &\frac{\left( m+1\right) q+2m%
}{m\left( q+1\right) }2\rho \left( -\Lambda ^{\prime }\left( t\right) \right)
\\
&=&\frac{\left( m-1\right) p+2}{m}2\rho \left( -\Lambda ^{\prime }\left(
t\right) \right) .
\end{eqnarray*}%
Integrating first from $t=0$ to $t=s$ and then from $s=0$ to $s=\infty $
implies 
\begin{eqnarray*}
\int_{M}\phi ^{2}-\left( \int_{M}\phi ^{\frac{2}{p}}\right) ^{p} &\leq
&\left( -\Lambda ^{\prime }\left( 0\right) \right) \frac{m}{\left(
m-1\right) p+2}\frac{1}{2\rho } \\
&=&\frac{p-1}{p}\frac{m}{\left( m-1\right) p+2}\frac{2}{\rho }%
\int_{M}\left\vert \nabla \phi \right\vert ^{2}.
\end{eqnarray*}%
This proves the result.
\end{proof}

\section{Diameter estimate for K\"{a}hler manifolds\label{Diam}}

In this section, we use the inequalities obtained in the previous sections
to prove Theorem \ref{A}.

\begin{theorem}
\label{A'}Let $\left( M^{m},g\right) $ be a K\"{a}hler manifold of complex
dimension $m\geq 2$ and with Ricci curvature $\mathrm{Ric}\geq \rho $. Then 
\begin{equation*}
\mathrm{diam}\left( M\right) \leq \frac{\pi }{\sqrt{\rho }}\sqrt{2m-1}\left(
1-\frac{1}{24m}\right)
\end{equation*}
\end{theorem}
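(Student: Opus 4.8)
The goal is to convert the Beckner inequality of Proposition~\ref{D} (equivalently the log-Sobolev inequality obtained as $p\to 1$) into a diameter bound using the Bakry--Ledoux approach.  The central idea of that theory is that a tight log-Sobolev or Beckner inequality with optimal constant controls the \emph{Gaussian-type concentration} of Lipschitz functions, and the concentration profile in turn pins down the diameter through the distance function $d(x_0,\cdot)$, which is $1$-Lipschitz.  First I would apply Proposition~\ref{D} with $\rho=2m-1$ so that the Beckner constant reads
\begin{equation*}
C_B=\frac{p-1}{p}\,\frac{2m}{\bigl((m-1)p+2\bigr)(2m-1)}.
\end{equation*}
The improvement over the Riemannian case comes from the factor $\tfrac{2m}{(m-1)p+2}$, which is strictly smaller than the sharp Riemannian value over the whole range $1<p\le 2$; this gap is exactly what will produce the $\bigl(1-\tfrac{1}{24m}\bigr)$ correction to $\pi/\sqrt{\rho}\cdot\sqrt{2m-1}$.

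\textbf{Key steps in order.}  The plan proceeds as follows.  (i) Rewrite the family of Beckner inequalities in the Bakry--Ledoux normalized form, isolating the best Beckner constant $C_B(p)$ as a function of $p\in(1,2]$; since the whole family holds simultaneously, I may optimize over $p$ at the end.  (ii) Feed $C_B(p)$ into the Bakry--Ledoux machinery: apply the Beckner inequality to $\phi=e^{\lambda d/2}$ (or to a suitable smooth approximation of the distance function, so that $|\nabla\phi|^2\le \tfrac{\lambda^2}{4}\phi^2$), obtaining a differential inequality for the Laplace-transform–type quantity $\int_M \phi^{2/p}$.  This is the step where the Beckner exponent enters as a moment of order $2/p$, and iterating in $p$ (or differentiating the resulting inequality) yields an exponential moment bound and hence a Gaussian concentration estimate $\mathrm{Vol}\{d(x_0,\cdot)\ge t\}\le e^{-c t^2}$ with $c$ governed by $1/C_B$.  (iii) Bakry--Ledoux then convert such concentration into the hard diameter bound $\mathrm{diam}(M)\le \pi\sqrt{\text{(constant from }C_B)}$; concretely, the best constant one can extract is $\pi\sqrt{2\,C_B(p)}$ after optimizing in $p$.  (iv) Finally, carry out the elementary optimization of $\sqrt{2\,C_B(p)}$ over $p\in(1,2]$ and Taylor-expand in $1/m$ to extract the numerical constant $\tfrac{1}{24m}$, arriving at $\mathrm{diam}(M)\le \tfrac{\pi}{\sqrt{\rho}}\sqrt{2m-1}\bigl(1-\tfrac{1}{24m}\bigr)$ (with $\rho=2m-1$ this gives Theorem~\ref{A}).

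\textbf{Main obstacle.}  The hardest part is step~(ii)--(iii): extracting a \emph{sharp} diameter constant rather than a merely finite one.  The Bakry--Ledoux theory produces $\mathrm{diam}(M)\le \pi\sqrt{2\,C_B}$ only when the Beckner constant is used in its tightest form and when the limiting behavior as $p\to 1$ (the log-Sobolev endpoint) is handled carefully, since naive use of a single exponent loses a dimension-dependent factor.  I expect the delicate point to be showing that the $p$-dependent family, optimized, beats the Bonnet--Myers value $\pi/\sqrt{\rho}\cdot\sqrt{2m-1}$ precisely by the claimed $O(1/m)$ margin rather than by an uncontrolled or larger amount; this requires identifying the optimal $p=p(m)$ (likely approaching $1$ at a specific rate as $m\to\infty$) and expanding $\sqrt{2\,C_B(p)}$ to second order.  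The Kähler improvement is encoded entirely in the factor $(m-1)p+2$ in the denominator of $C_B$, so the whole argument hinges on tracking this term faithfully through the optimization, after which the remaining computation—deriving the constant $\tfrac{1}{24}$—is a routine Taylor expansion.
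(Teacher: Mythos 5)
Your plan fails at its central step: the Beckner inequalities of Proposition \ref{D}, covering $1<p\le 2$ (with the log-Sobolev and Poincar\'e endpoints), carry no diameter information whatsoever, so no implication of the form $\mathrm{diam}(M)\le \pi\sqrt{2\,C_B(p)}$ exists for this range. The Herbst-type argument you sketch in step (ii) with $\phi=e^{\lambda d/2}$ yields Gaussian concentration $\mathrm{Vol}\{d(x_0,\cdot)\ge t\}\le e^{-ct^2}$, but Gaussian concentration is perfectly consistent with infinite diameter: the Ornstein--Uhlenbeck generator (Gaussian measure on $\mathbb{R}^n$) satisfies the full Beckner family for $1\le p\le 2$ and has unbounded support. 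A concrete Riemannian counterexample to your step (iii): the round sphere $\mathbb{S}^n$ with $\mathrm{Ric}=n-1$ satisfies the Poincar\'e inequality ($p=2$) with constant $C_B=1/n$, yet $\pi\sqrt{2C_B}=\pi\sqrt{2/n}$ is far smaller than its actual diameter $\pi$; similarly, for $(\mathbb{CP}^1)^m$ with $\mathrm{Ric}=2m-1$ your claimed bound would give $O(\pi/\sqrt{m})$ while the diameter is $\pi\sqrt{m/(2m-1)}\approx \pi/\sqrt{2}$. The Bakry--Ledoux diameter theorem \cite{BL} requires a Sobolev inequality with exponent $p>2$ and gives $\mathrm{diam}(M)\le \pi\sqrt{2pA}/(p-2)$, a constant which blows up as $p\to 2^{+}$; your proposed optimization with $p=p(m)\to 1$ runs in exactly the wrong direction.

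The paper's actual proof works in the super-quadratic range. It feeds Proposition \ref{C}, i.e. the Sobolev inequality with $A=\frac{(m+(m-1)k)(p-2)}{2m\rho}$ for $2<p\le \frac{2m}{m-1}$, into the Bakry--Ledoux bound above; the factor $p-2$ partially cancels and one gets $\mathrm{diam}(M)\le \frac{\pi}{\sqrt{\rho}}\sqrt{\frac{p\left(m+(m-1)k\right)}{m(p-2)}}$, as in (\ref{a20}). The K\"ahler gain enters through the coefficient $m+(m-1)k<2m-1$ available for $k<1$, at the price of the constraint $p\le 1+\frac{m+1}{m-1}\frac{4k}{(k+1)^2}$; the proof saturates this constraint (so $p$ sits just below the critical exponent $\frac{2m}{m-1}$, not near $1$ or $2$), chooses $k=1-\frac{1}{2m}$, and then an elementary algebraic estimate (showing $\Psi\ge 2$, hence $\mathcal{S}\ge \frac{2}{(k+1)^2}$) quantifies the improvement over $\sqrt{2m-1}$ as at least $\frac{\sqrt{2m-1}}{24m}$. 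So the correct repair of your argument is to replace Proposition \ref{D} by Proposition \ref{C} throughout and to optimize the pair $(p,k)$ near the critical Sobolev exponent, where the factor $(p-2)^{-1}$ in the Bakry--Ledoux constant remains bounded; no expansion of Beckner constants near $p=1$ can produce the result.
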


\begin{proof}
According to \cite{BL}, if the Sobolev inequality
\begin{equation*}
\left( \int_{M}\left\vert \phi \right\vert ^{p}\right) ^{\frac{2}{p}%
}-\int_{M}\phi ^{2}\leq A\int_{M}\left\vert \nabla \phi \right\vert ^{2}
\end{equation*}%
holds for some $p>2,$ then 
\begin{equation*}
\mathrm{diam}\left( M\right) \leq \pi \frac{\sqrt{2pA}}{p-2}.
\end{equation*}%
In our setting, applying Proposition \ref{C} for 
\begin{equation}
p=1+\frac{m+1}{m-1}\frac{4k}{\left( k+1\right) ^{2}},  \label{a19}
\end{equation}%
we get%
\begin{equation}
\mathrm{diam}\left( M\right) \leq \frac{\pi }{\sqrt{\rho }}\sqrt{\frac{%
p\left( m+\left( m-1\right) k\right) }{m\left( p-2\right) }}.  \label{a20}
\end{equation}%
Recall that the Bonnet-Myers estimate is 
\begin{equation*}
\mathrm{diam}\left( M\right) \leq \frac{\pi }{\sqrt{\rho }}\sqrt{2m-1}.
\end{equation*}%
To show that (\ref{a20}) improves this estimate, we compute 
\begin{eqnarray}
&&\sqrt{2m-1}-\sqrt{\frac{p\left( m+\left( m-1\right) k\right) }{m\left(
p-2\right) }}  \label{a20.1} \\
&=&\frac{1}{m\left( p-2\right) }\left( \sqrt{2m-1}+\sqrt{\frac{p\left(
m+\left( m-1\right) k\right) }{m\left( p-2\right) }}\right) ^{-1}\mathcal{S},
\notag
\end{eqnarray}%
where 
\begin{equation*}
\mathcal{S}=m\left( 2m-1\right) \left( p-2\right) -p\left( m+\left(
m-1\right) k\right) .
\end{equation*}%
Using $p$ from (\ref{a19}) we get 
\begin{eqnarray}
\mathcal{S} &=&m\left( 2m-1\right) \left( \frac{m+1}{m-1}\frac{4k}{\left(
k+1\right) ^{2}}-1\right)  \label{a21} \\
&&-\left( m+\left( m-1\right) k\right) \left( \frac{m+1}{m-1}\frac{4k}{%
\left( k+1\right) ^{2}}+1\right)  \notag \\
&=&\frac{1}{\left( m-1\right) \left( k+1\right) ^{2}}\Omega ,  \notag
\end{eqnarray}%
where 
\begin{eqnarray*}
\Omega &=&4m\left( 2m-1\right) \left( m+1\right) k-m\left( 2m-1\right)
\left( m-1\right) \left( k+1\right) ^{2} \\
&&-4\left( m+\left( m-1\right) k\right) \left( m+1\right) k-\left( m+\left(
m-1\right) k\right) \left( m-1\right) \left( k+1\right) ^{2} \\
&=&4\left( m+1\right) \left( m-1\right) \left( 2m-k\right) k-\left(
m-1\right) \left( 2m^{2}+\left( m-1\right) k\right) \left( k+1\right) ^{2}.
\end{eqnarray*}%
Hence, using this in (\ref{a21}) we get 
\begin{equation}
\mathcal{S}=\frac{1}{\left( k+1\right) ^{2}}\Psi ,  \label{a22}
\end{equation}%
where 
\begin{eqnarray*}
\Psi &=&4\left( m+1\right) \left( 2m-k\right) k-\left( 2m^{2}+\left(
m-1\right) k\right) \left( k+1\right) ^{2} \\
&=&\left( 1-k\right) \left( \left( m-1\right) \left( 1-k\right) ^{2}-\left(
2m^{2}+9m-1\right) \left( 1-k\right) +8m\right) .
\end{eqnarray*}%
We now set 
\begin{equation}
k=1-\frac{1}{2m},  \label{a23}
\end{equation}%
for which we note that 
\begin{eqnarray}
p-2 &=&\frac{1}{\left( m-1\right) \left( k+1\right) ^{2}}\left( 8-8\left(
1-k\right) -\left( m-1\right) \left( 1-k\right) ^{2}\right)  \label{a24} \\
&>&0.  \notag
\end{eqnarray}

Furthermore, it is easy to see that $\Psi \geq 2.$ Hence, by (\ref{a22}),
this implies%
\begin{equation}
\mathcal{S}\geq \frac{2}{\left( k+1\right) ^{2}}.  \label{a24.1}
\end{equation}

By (\ref{a20.1}) we get 
\begin{eqnarray}
&&\sqrt{2m-1}-\sqrt{\frac{p\left( m+\left( m-1\right) k\right) }{m\left(
p-2\right) }}  \label{a24.2} \\
&=&\frac{2}{m\left( p-2\right) \left( k+1\right) ^{2}}\left( \sqrt{2m-1}+%
\sqrt{\frac{p\left( m+\left( m-1\right) k\right) }{m\left( p-2\right) }}%
\right) ^{-1}.  \notag
\end{eqnarray}%
From (\ref{a24}) and (\ref{a23}) we have 
\begin{eqnarray*}
m\left( p-2\right) \left( k+1\right) ^{2} &=&\frac{m}{m-1}\left( 8-\frac{4}{m%
}-\frac{m-1}{4m^{2}}\right) \\
&\leq &\frac{8m}{m-1}.
\end{eqnarray*}%
Furthermore, note that 
\begin{equation*}
p\left( m+\left( m-1\right) k\right) \leq \frac{2m}{m-1}\left( 2m-1\right) .
\end{equation*}

Hence, (\ref{a24.2}) yields%
\begin{equation*}
\sqrt{2m-1}-\sqrt{\frac{p\left( m+\left( m-1\right) k\right) }{m\left(
p-2\right) }}\geq \frac{m-1}{8m\sqrt{2m-1}}.
\end{equation*}%
It follows that 
\begin{equation}
\sqrt{2m-1}-\sqrt{\frac{p\left( m+\left( m-1\right) k\right) }{m\left(
p-2\right) }}\geq \frac{\sqrt{2m-1}}{24m},  \label{a25}
\end{equation}%
for all $m\geq 2$.

By (\ref{a20}) and (\ref{a25}) we obtain 
\begin{equation*}
\mathrm{diam}\left( M\right) \leq \frac{\pi }{\sqrt{\rho }}\sqrt{2m-1}\left(
1-\frac{1}{24m}\right) .
\end{equation*}%
This proves the theorem.
\end{proof}

We conclude with a different approach to the diameter estimate, which we learned from  Jiaping Wang.

\begin{proposition}
\label{P}Let $\left( M^{m},g\right) $ be a K\"{a}hler manifold of complex
dimension $m\geq 2$ and with Ricci curvature $\mathrm{Ric}\geq 2m-1$. Let $d=%
\mathrm{diam}\left( M\right) $ be the diameter of $M$. Then%
\begin{equation*}
2\left( 2m-1\right) \leq \left( \frac{\pi }{d}\right) ^{2}\frac{\int_{0}^{%
\frac{d}{2}}\sin ^{2}\left( \frac{\pi r}{d}\right) \sin ^{2m-1}rdr}{%
\int_{0}^{\frac{d}{2}}\cos ^{2}\left( \frac{\pi r}{d}\right) \sin ^{2m-1}rdr}
\end{equation*}
\end{proposition}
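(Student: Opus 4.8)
The plan is to sandwich the first nonzero eigenvalue $\lambda_1$ of the Laplacian between a lower bound coming from the sharp Poincar\'e inequality already obtained and an upper bound coming from a Cheng-type test function supported on two half-diameter balls. The right-hand side of the proposition is precisely the one-dimensional \emph{model} Rayleigh quotient that emerges after a volume comparison, so the whole statement is the inequality $2(2m-1)=\text{(lower bound for }\lambda_1)\le\text{(upper bound for }\lambda_1)$.

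For the lower bound I would invoke the sharp Poincar\'e inequality $\int_M\phi^2-(\int_M\phi)^2\le\frac{1}{2\rho}\int_M|\nabla\phi|^2$ recorded after Theorem \ref{SK} (the case $p=2$ of Proposition \ref{D}, where $C_B=\frac{1}{2\rho}$). With $\rho=2m-1$ this is exactly $\lambda_1\ge 2(2m-1)$, the left-hand side of the proposition. For the upper bound, let $p,q$ realize the diameter, $d(p,q)=d$. The open balls $B(p,d/2)$ and $B(q,d/2)$ are disjoint. On $B(p,d/2)$ I set $\phi_1=\cos(\pi r/d)$ with $r=d(p,\cdot)$, extended by $0$; since $\phi_1$ vanishes on $\partial B(p,d/2)$ it is Lipschitz with compact support, with $|\nabla\phi_1|=\frac{\pi}{d}\sin(\pi r/d)$ almost everywhere. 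I define $\phi_2$ analogously at $q$. Because $\alpha\mapsto\int_M(\alpha\phi_1+\beta\phi_2)$ has nontrivial kernel, there is a nonzero mean-zero combination $\phi=\alpha\phi_1+\beta\phi_2$; as the supports are disjoint the cross terms vanish, so the variational characterization of $\lambda_1$ gives $\lambda_1\le\max_i \big(\int_M|\nabla\phi_i|^2/\int_M\phi_i^2\big)$. It therefore suffices to bound each single-ball quotient $\frac{\pi^2}{d^2}\,\frac{\int_{B(p,d/2)}\sin^2(\pi r/d)\,dV}{\int_{B(p,d/2)}\cos^2(\pi r/d)\,dV}$, the model comparison below being identical for $p$ and $q$.

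The crux is to replace the manifold volume by the model weight $\sin^{2m-1}r\,dr$. In geodesic polar coordinates write $dV=\mathcal A(r,\theta)\,dr\,d\theta$, extended by $\mathcal A\equiv 0$ beyond the cut locus. By Bonnet--Myers $d\le\pi$, so $[0,d/2]\subset[0,\pi/2]$; and since $\mathrm{Ric}\ge 2m-1=(n-1)$ with $n=2m$, Bishop's comparison makes $w(r,\theta):=\mathcal A(r,\theta)/\sin^{2m-1}r$ non-increasing in $r$ on $[0,d/2]$. Fix $\theta$, set $d\mu=\sin^{2m-1}r\,dr$, $g_2=\cos^2(\pi r/d)$ and $g_1=1-g_2=\sin^2(\pi r/d)$. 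On $[0,d/2]$ both $g_2$ and $w(\cdot,\theta)$ are non-increasing, so Chebyshev's integral inequality yields $\int d\mu\,\int g_2 w\,d\mu\ge\int g_2\,d\mu\,\int w\,d\mu$; a direct manipulation using $g_1=1-g_2$ rewrites this as $\int g_1 w\,d\mu\cdot\int g_2\,d\mu\le\int g_1\,d\mu\cdot\int g_2 w\,d\mu$, i.e. the per-ray ratio $\int g_1\mathcal A\,dr/\int g_2\mathcal A\,dr$ is at most the model ratio $\int_0^{d/2}g_1\sin^{2m-1}r\,dr/\int_0^{d/2}g_2\sin^{2m-1}r\,dr$. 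Integrating the resulting per-ray inequality $\int g_1\mathcal A\,dr\le(\text{model ratio})\int g_2\mathcal A\,dr$ over $\theta$ and restoring the factor $\pi^2/d^2$ bounds the single-ball Rayleigh quotient by the right-hand side of the proposition, so $\lambda_1\le(\text{RHS})$. Combined with $\lambda_1\ge 2(2m-1)$ this proves the claim.

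The main obstacle is this monotone-rearrangement step: one must check the \emph{direction} of the volume comparison (that $w$ decreases) and the monotonicity of $g_2$ on $[0,d/2]$ so that Chebyshev applies with the correct sign, and one must absorb the cut locus cleanly by extending $\mathcal A\equiv 0$ so that all monotonicity and the polar-coordinate integration hold on the full interval $[0,d/2]$. Everything else—the disjointness of the two balls, the vanishing of cross terms, and the passage from a single-ball quotient to a bound on $\lambda_1$—is the standard Cheng test-function mechanism.
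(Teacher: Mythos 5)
Your proof is correct, and it keeps the paper's overall skeleton --- the lower bound $\lambda_1(M)\ge 2(2m-1)$ from the sharp Poincar\'e inequality ($p=2$ in Proposition \ref{D}, exactly as the paper does), plus the two-ball mean-zero test function mechanism to bound $\lambda_1$ from above --- but it replaces the key comparison step by a genuinely different argument. The paper bounds $\lambda_1(M)\le \max\left\{\mu_1\left(B\left(p,\frac{d}{2}\right)\right),\mu_1\left(B\left(q,\frac{d}{2}\right)\right)\right\}$ and then invokes Cheng's eigenvalue comparison theorem \cite{C} to dominate these first Dirichlet eigenvalues by $\mu_1\left(\widetilde{B}\left(\frac{d}{2}\right)\right)$ in the model sphere $\left(\mathbb{S}^{2m},\widetilde{g}\right)$ with $\mathrm{Ric}_{\widetilde{g}}=2m-1$, testing the \emph{model} eigenvalue with $\cos\left(\frac{\pi r}{d}\right)$ only at the last step. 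You instead test directly on $M$ with $\cos\left(\frac{\pi r}{d}\right)$ on each ball (the disjointness, cross-term, and mediant arguments are the standard mechanism and are fine), and transfer the resulting single-ball Rayleigh quotient to the model by Bishop comparison plus Chebyshev's integral inequality: with $n=2m$ and $\mathrm{Ric}\ge n-1$ the ratio $w=\mathcal{A}/\sin^{2m-1}r$ is non-increasing up to the cut locus and stays non-increasing after the extension $\mathcal{A}\equiv 0$, Bonnet--Myers gives $\left[0,\frac{d}{2}\right]\subset\left[0,\frac{\pi}{2}\right]$ so $g_2=\cos^2\left(\frac{\pi r}{d}\right)$ is also non-increasing there, and Chebyshev's inequality $\int d\mu \int g_2 w\, d\mu \ge \int g_2\, d\mu \int w\, d\mu$ is, after substituting $g_1=1-g_2$, exactly equivalent to your per-ray ratio inequality --- I checked the direction and it is correct; integrating over $\theta$ then gives the model quotient, as claimed. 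What each approach buys: the paper's route is shorter given the literature, outsourcing the comparison to \cite{C}; yours is self-contained and in fact proves only the instance actually needed (comparison of the Rayleigh quotient of one explicit radial test function, rather than of first Dirichlet eigenvalues), at the cost of the rearrangement bookkeeping at the cut locus, which you handle correctly. Both routes yield the identical final inequality of Proposition \ref{P}.
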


\begin{proof}
Recall that on a K\"{a}hler manifold with $\mathrm{Ric}\geq 2m-1$ the first
nonzero eigenvalue of the Laplacian satisfies 
\begin{equation}
\lambda _{1}\left( M\right) \geq 2\left( 2m-1\right) .  \label{p1}
\end{equation}%
Indeed, this also follows by setting $p=2$ in Proposition \ref{D}.

Let $p,q\in M$ be so that $d=d\left( p,q\right) $. Then we have the
following well known estimate 
\begin{equation}
\lambda _{1}\left( M\right) \leq \max \left\{ \mu _{1}\left( B\left( p,\frac{%
d}{2}\right) ,\mu _{1}\left( B\left( q,\frac{d}{2}\right) \right) \right)
\right\} ,  \label{p2}
\end{equation}%
where $\mu _{i}\left( \Omega \right) $ denotes the first Dirichlet
eigenvalue of $\Omega $. By using Cheng's comparison theorem \cite{C}, we
know that 
\begin{equation*}
\max \left\{ \mu _{1}\left( B\left( p,\frac{d}{2}\right) ,\mu _{1}\left(
B\left( q,\frac{d}{2}\right) \right) \right) \right\} \leq \mu _{1}\left( 
\widetilde{B}\left( \frac{d}{2}\right) \right) ,
\end{equation*}%
where $\mu _{1}\left( \widetilde{B}\left( \frac{d}{2}\right) \right) $
denotes the first Dirichlet eigenvalue of the ball of radius $\frac{d}{2}$
in the sphere $\left( \mathbb{S}^{2m},\widetilde{g}\right) $ normalized so
that $\mathrm{Ric}_{\widetilde{g}}=2m-1$. Hence, (\ref{p2}) and (\ref{p1})
imply that 
\begin{equation}
2\left( 2m-1\right) \leq \mu _{1}\left( \widetilde{B}\left( \frac{d}{2}%
\right) \right) .  \label{p3}
\end{equation}%
To estimate the right side of (\ref{p3}) we use the variational
characterization%
\begin{equation}
\bar{\mu}_{1}\left( \widetilde{B}\left( \frac{d}{2}\right) \right) \leq 
\frac{\int_{\widetilde{B}\left( \frac{d}{2}\right) }\left\vert \nabla \phi
\right\vert _{\widetilde{g}}^{2}}{\int_{\widetilde{B}\left( \frac{d}{2}%
\right) }\phi ^{2}},  \label{p4}
\end{equation}%
for any function $\phi $ supported in $\widetilde{B}\left( \frac{d}{2}%
\right) $. Choosing a rotationally symmetric function 
\begin{equation*}
\phi \left( r\right) =\cos \left( \frac{r\pi }{d}\right) ,
\end{equation*}%
we get by (\ref{p4}) that 
\begin{equation*}
\bar{\mu}_{1}\left( \widetilde{B}\left( \frac{d}{2}\right) \right) \leq
\left( \frac{\pi }{d}\right) ^{2}\frac{\int_{0}^{\frac{d}{2}}\sin ^{2}\left( 
\frac{\pi r}{d}\right) \sin ^{2m-1}rdr}{\int_{0}^{\frac{d}{2}}\cos
^{2}\left( \frac{\pi r}{d}\right) \sin ^{2m-1}rdr}.
\end{equation*}%
Using (\ref{p3}), this proves the proposition.
\end{proof}

We use Proposition \ref{P} to prove the following result.

\begin{theorem}
\label{A''}Let $\left( M^{m},g\right) $ be a K\"{a}hler manifold of complex
dimension $m\geq 2$ and with Ricci curvature $\mathrm{Ric}\geq 2m-1$. Then 
\begin{equation*}
\mathrm{diam}\left( M\right) \leq \pi \left(1 -\frac{1}{200\sqrt{m}\ln m}\right).
\end{equation*}
\end{theorem}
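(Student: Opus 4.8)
The plan is to apply Proposition \ref{P} and extract an asymptotic inequality from the resulting ratio of trigonometric integrals that forces the diameter $d$ to be strictly less than $\pi$ by the stated margin. Since $\mathrm{Ric}\geq 2m-1$, the Bonnet-Myers theorem already gives $d\leq\pi$, so I write $d=\pi(1-\epsilon)$ with $\epsilon\geq 0$ and aim to show $\epsilon\geq \frac{1}{200\sqrt{m}\ln m}$. Substituting into the inequality from Proposition \ref{P}, the key quantity to understand is
\begin{equation*}
F(d)=\left(\frac{\pi}{d}\right)^2\frac{\int_0^{d/2}\sin^2\!\left(\frac{\pi r}{d}\right)\sin^{2m-1}r\,dr}{\int_0^{d/2}\cos^2\!\left(\frac{\pi r}{d}\right)\sin^{2m-1}r\,dr},
\end{equation*}
which by Proposition \ref{P} must be at least $2(2m-1)$. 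First I would check the boundary case: when $d=\pi$ exactly, the factor $(\pi/d)^2=1$ and the weight $\sin^{2m-1}r$ concentrates sharply near $r=\pi/2$ (the endpoint of the integration range $[0,\pi/2]$), so both integrals are dominated by the region where $r\approx\pi/2$; there $\sin^2(\pi r/d)=\sin^2 r\to 1$ and $\cos^2(\pi r/d)\to 0$, which makes the ratio blow up and in fact exceed $2(2m-1)$. The point is that the inequality $F(d)\geq 2(2m-1)$ is \emph{satisfiable} at $d=\pi$ but only marginally; I need to quantify how much slack there is so as to push $d$ strictly below $\pi$.

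The main analytic step is a Laplace-type (concentration) asymptotic analysis of the two integrals as $m\to\infty$. Because $\sin^{2m-1}r$ is sharply peaked at $r=\pi/2$ on $[0,\pi/2]$, I would substitute $r=\pi/2-s$ and expand $\sin^{2m-1}(\pi/2-s)=\cos^{2m-1}s\approx e^{-ms^2}$ for small $s$, turning each integral into a half-Gaussian integral against a smooth trigonometric weight evaluated near $s=0$. Writing $d=\pi(1-\epsilon)$ and expanding $\sin^2(\pi r/d)$ and $\cos^2(\pi r/d)$ near $r=\pi/2$ in powers of $\epsilon$ and $s$, I would compute the leading behavior of both numerator and denominator. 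The numerator's integrand is near its maximum (of order $1$) while the denominator's is near a zero of $\cos^2(\pi r/d)$; the location of that zero shifts with $\epsilon$, so the denominator integral's leading term depends delicately on $\epsilon$ and on the Gaussian width $1/\sqrt{m}$. Balancing these, I expect the ratio $F(d)$ to behave like $2(2m-1)$ times a factor of the form $1+c\,(\text{something in }\epsilon,m)$, and the constraint $F(d)\geq 2(2m-1)$ then becomes a lower bound on $\epsilon$.

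The hard part will be controlling the error terms in the Laplace expansion uniformly enough to produce the clean, fully explicit constant $\frac{1}{200\sqrt{m}\ln m}$ valid for \emph{all} $m\geq 2$, not merely asymptotically. In particular, the appearance of $\ln m$ in the denominator (rather than a pure power of $m$) suggests that the effective integration window is not the naive Gaussian scale $1/\sqrt{m}$ but is instead cut off by where $\cos^2(\pi r/d)$ becomes comparable to $\sin^2(\pi r/d)$, introducing a logarithmic factor from integrating $e^{-ms^2}$ against a weight that vanishes quadratically at a point displaced by $O(\epsilon)$ from the peak. I would therefore carefully track the tradeoff: making $\epsilon$ too small forces the denominator's vanishing point to sit inside the Gaussian bulk, shrinking the denominator and inflating $F(d)$ beyond $2(2m-1)$ in a way that contradicts tightness only if $\epsilon$ is at least of order $\frac{1}{\sqrt{m}\ln m}$. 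Finally I would verify the explicit numerical constant $200$ by bounding all the trigonometric and Gaussian error terms crudely but rigorously, checking the small cases $m=2,3,\dots$ directly if the asymptotic estimates degrade for small $m$, and thereby conclude $d\leq\pi\bigl(1-\tfrac{1}{200\sqrt{m}\ln m}\bigr)$.
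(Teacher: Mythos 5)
Your overall plan---start from Proposition \ref{P} and quantify how the ratio of trigonometric integrals forces $\pi-d$ to be at least of order $\frac{1}{\sqrt m\,\ln m}$---is indeed the paper's strategy, but your proposal inverts the mechanism at the decisive point, and as written the argument would prove nothing. You assert that at $d=\pi$ the ratio ``blows up and in fact exceeds $2(2m-1)$,'' so that the constraint from Proposition \ref{P} is ``satisfiable at $d=\pi$ but only marginally.'' This is false. At $d=\pi$ one has $\cos(d/2)=0$, and the exact integration-by-parts recursion (the paper's (\ref{m3'}))
\begin{equation*}
I_{2k+1}=\frac{2k}{2k+1}I_{2k-1}-\frac{1}{2k+1}\sin^{2k}\left(\tfrac{d}{2}\right)\cos\left(\tfrac{d}{2}\right),\qquad I_{n}=\int_{0}^{d/2}\sin^{n}r\,dr,
\end{equation*}
gives $I_{2m+1}=\frac{2m}{2m+1}I_{2m-1}$, so the ratio equals $\frac{I_{2m+1}}{I_{2m-1}-I_{2m+1}}=2m$ exactly: finite, and strictly \emph{below} $2(2m-1)$ for every $m\geq 2$. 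Your own Laplace heuristic, carried through honestly, confirms this: with $r=\pi/2-s$ the numerator is $\approx\int_{0}^{\infty}e^{-ms^{2}}ds$ and the denominator $\approx\int_{0}^{\infty}s^{2}e^{-ms^{2}}ds$, smaller only by a factor $\approx 2m$ --- the weight concentrates at scale $1/\sqrt m$, where $\cos^{2}r\sim s^{2}$ is small but by no means negligible. Thus the inequality of Proposition \ref{P} \emph{fails} at $d=\pi$ and in a whole neighborhood below it, and that failure is the entire engine of the proof: one assumes $\varepsilon=\pi/d-1<\frac{1}{100\sqrt m\ln m}$, shows the ratio is then still at most roughly $\frac{3}{2}(2m+1)$, and contradicts $2(2m-1)\leq(\pi/d)^{2}\cdot(\text{ratio})$, since the K\"ahler eigenvalue bound $\lambda_{1}\geq 2(2m-1)$ is about twice what a near-$\pi$ diameter permits. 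Your stated mechanism --- small $\epsilon$ inflates $F(d)$ beyond $2(2m-1)$ --- runs exactly backwards: if small $\epsilon$ made $F(d)$ large, Proposition \ref{P} would be comfortably satisfied near $d=\pi$ and would impose no diameter improvement at all.

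Two further points, even granting the corrected direction. First, the paper needs none of the uniform-remainder Laplace machinery you propose: it runs entirely on the exact recursion above, the elementary bounds $\frac{\varepsilon}{2}\leq\cos(d/2)\leq 2\varepsilon$, Stirling's inequality (yielding $I_{2m+1}\geq\frac{1}{\sqrt{2m+1}}-2\varepsilon\ln(2m+1)$), and crude pointwise estimates such as $\cos^{2}\left(\frac{\pi r}{d}\right)\geq\cos^{2}r-4\varepsilon\cos r$; everything is finite and holds for all $m\geq 2$ simultaneously, with no separate small-$m$ verification. Second, your explanation of the $\ln m$ is off: the vanishing point of $\cos^{2}(\pi r/d)$ is always the endpoint $r=d/2$ of the integration range, so it never ``sits inside the Gaussian bulk'' as a displaced interior zero. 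The logarithm instead comes from the harmonic-type sum $2\varepsilon\left(\frac{1}{3}+\frac{1}{5}+\cdots+\frac{1}{2m+1}\right)\sim\varepsilon\ln m$ of boundary terms accumulated when iterating the recursion down from $I_{1}$, which must be kept below the main term $\sim\frac{1}{\sqrt{2m+1}}$ --- whence precisely the threshold $\varepsilon\sim\frac{1}{\sqrt m\,\ln m}$.
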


\begin{proof}
Let 
\begin{equation}
\varepsilon =\frac{\pi }{d}-1>0  \label{m1}
\end{equation}%
and assume by contradiction that 
\begin{equation}
\varepsilon <\frac{1}{100\sqrt{m}\ln m}.  \label{m2}
\end{equation}%
We have the inequalities%
\begin{eqnarray}
1 &\geq &\cos \left( \varepsilon t\right) \geq 1-\frac{1}{2}\varepsilon
^{2}t^{2}  \label{m3} \\
\varepsilon t &\geq &\sin \left( \varepsilon t\right) \geq \frac{1}{2}%
\varepsilon t,  \notag
\end{eqnarray}%
for any $0<t<\frac{\pi }{2}.$ Denote with%
\begin{equation*}
I_{n}=\int_{0}^{\frac{d}{2}}\sin ^{n}rdr.
\end{equation*}%
Integrating by parts we get the relation 
\begin{equation}
I_{2k+1}=\frac{2k}{2k+1}I_{2k-1}-\frac{1}{2k+1}\sin ^{2k}\left( \frac{d}{2}%
\right) \cos \left( \frac{d}{2}\right) ,  \label{m3'}
\end{equation}%
for any $k\geq 1$. As 
\begin{eqnarray*}
\cos \left( \frac{d}{2}\right) &=&\cos \left( \frac{\pi }{2}-\frac{%
\varepsilon }{1+\varepsilon }\frac{\pi }{2}\right) \\
&=&\sin \left( \frac{\varepsilon }{1+\varepsilon }\frac{\pi }{2}\right) ,
\end{eqnarray*}%
we get from (\ref{m2}) and (\ref{m3}) that%
\begin{equation}
\frac{1}{2}\varepsilon \leq \cos \left( \frac{d}{2}\right) \leq 2\varepsilon
.  \label{m4}
\end{equation}%
This implies that 
\begin{equation}
I_{2k+1}\geq \frac{2k}{2k+1}I_{2k-1}-\frac{2\varepsilon }{2k+1}.  \label{m5}
\end{equation}%
Iterating, we get%
\begin{equation*}
I_{2m+1}\geq \frac{2^{2m}\left( m!\right) ^{2}}{\left( 2m+1\right) !}\left(
1-\cos \frac{d}{2}\right) -2\varepsilon \left( \frac{1}{2m+1}+\frac{1}{2m-1}%
+...+\frac{1}{3}\right) .
\end{equation*}%
Using Stirling inequalities 
\begin{equation*}
\sqrt{2\pi }\left( \frac{n}{e}\right) ^{n}\sqrt{n}\leq n!\leq e\left( \frac{n%
}{e}\right) ^{n}\sqrt{n},
\end{equation*}%
together with (\ref{m2}) and (\ref{m4}) it follows that 
\begin{equation}
I_{2m+1}\geq \frac{1}{\sqrt{2m+1}}-2\varepsilon \ln \left( 2m+1\right) .
\label{m6}
\end{equation}%
In particular, (\ref{m2}) and (\ref{m6}) imply that 
\begin{equation}
I_{2m+1}\geq \frac{4}{5}\frac{1}{\sqrt{2m+1}}.  \label{m6'}
\end{equation}

Note that 
\begin{equation}
\frac{\int_{0}^{\frac{d}{2}}\sin ^{2}\left( \frac{\pi r}{d}\right) \sin
^{2m-1}rdr}{\int_{0}^{\frac{d}{2}}\cos ^{2}\left( \frac{\pi r}{d}\right)
\sin ^{2m-1}rdr}=\frac{I_{2m-1}}{\int_{0}^{\frac{d}{2}}\cos ^{2}\left( \frac{%
\pi r}{d}\right) \sin ^{2m-1}rdr}-1.  \label{m7}
\end{equation}%
From (\ref{m3}) we get 
\begin{eqnarray*}
\cos ^{2}\left( \frac{\pi r}{d}\right) &=&\left( \cos \left( \varepsilon
r\right) \cos r-\sin \left( \varepsilon r\right) \sin r\right) ^{2} \\
&\geq &\cos ^{2}\left( \varepsilon r\right) \cos ^{2}r-2\sin \left(
\varepsilon r\right) \cos \left( \varepsilon r\right) \sin r\cos r \\
&\geq &\left( 1-\varepsilon ^{2}r^{2}\right) \cos ^{2}r-2\varepsilon r\sin
r\cos r \\
&\geq &\cos ^{2}r-4\varepsilon \cos r.
\end{eqnarray*}%
Then it follows that 
\begin{eqnarray*}
\int_{0}^{\frac{d}{2}}\cos ^{2}\left( \frac{\pi r}{d}\right) \sin ^{2m-1}rdr
&\geq &\int_{0}^{\frac{d}{2}}\cos ^{2}r\sin ^{2m-1}rdr-4\varepsilon
\int_{0}^{\frac{d}{2}}\cos r\sin ^{2m-1}rdr \\
&=&I_{2m-1}-I_{2m+1}-\frac{2\varepsilon }{m}\sin ^{2m}\left( \frac{d}{2}%
\right) \\
&\geq &I_{2m-1}-I_{2m+1}-\frac{2\varepsilon }{m}.
\end{eqnarray*}%
From (\ref{m3'}), we deduce that 
\begin{equation*}
\int_{0}^{\frac{d}{2}}\cos ^{2}\left( \frac{\pi r}{d}\right) \sin
^{2m-1}rdr\geq \frac{1}{2m+1}I_{2m-1}-\frac{2\varepsilon }{m}.
\end{equation*}%
However, by \ (\ref{m2}) and (\ref{m6'}) we get%
\begin{equation*}
\frac{2\varepsilon }{m}<\frac{1}{3}\frac{1}{2m+1}I_{2m+1}.
\end{equation*}%
Therefore, this implies 
\begin{equation}
\int_{0}^{\frac{d}{2}}\cos ^{2}\left( \frac{\pi r}{d}\right) \sin
^{2m-1}rdr\geq \frac{2}{3}\frac{1}{2m+1}I_{2m-1}.  \label{m8}
\end{equation}%
Hence, by (\ref{m7}) and (\ref{m8}) we infer that 
\begin{equation*}
\frac{\int_{0}^{\frac{d}{2}}\sin ^{2}\left( \frac{\pi r}{d}\right) \sin
^{2m-1}rdr}{\int_{0}^{\frac{d}{2}}\cos ^{2}\left( \frac{\pi r}{d}\right)
\sin ^{2m-1}rdr}\leq \frac{3}{2}\left( 2m+1\right) .
\end{equation*}%
By Proposition \ref{P} we get 
\begin{equation*}
2\left( 2m-1\right) \leq \frac{3}{2}\left( 1+\varepsilon \right) ^{2}\left(
2m+1\right) ,
\end{equation*}%
which contradicts (\ref{m2}).

Therefore, we have%
\begin{equation*}
\frac{\pi }{d}-1\geq \frac{1}{100\sqrt{m}\ln m},
\end{equation*}%
which proves the result.
\end{proof}

\end{document}